\DeclareSymbolFont{bbold}{U}{bbold}{m}{n}
\DeclareSymbolFontAlphabet{\mathbbold}{bbold}
\newtheorem{thm}{Theorem}[section]
\newtheorem{prop}[thm]{Proposition}
\newtheorem{cor}[thm]{Corollary}
\newtheorem{obs}[thm]{Observation}
\theoremstyle{definition}
\newtheorem{defn}[thm]{Definition}
\newtheorem{exmp}[thm]{Example}
\theoremstyle{remark}
\newtheorem{rmk}[thm]{Remark}
\newcommand{\R}{\mathbb{R}}
\newcommand{\C}{\mathbb{C}}
\newcommand{\N}{\mathbb{N}}
\newcommand{\Z}{\mathbb{Z}}
\newcommand{\F}{\mathbb{F}}
\newcommand{\T}{\mathbb{T}}
\newcommand{\FF}{\mathscr{P}}
\newcommand{\GG}{\mathscr{C}}
\newcommand{\CC}{\mathcal{C}}
\newcommand{\PP}{\mathcal{P}}
\newcommand{\OO}{\mathcal{O}}
\newcommand{\EE}{\mathcal{E}}
\newcommand{\DD}{\mathcal{D}}
\newcommand{\LL}{\mathcal{L}}
\newcommand{\B}{\mathcal{B}}
\DeclarePairedDelimiter\abs{\lvert}{\rvert}
\DeclarePairedDelimiter\norm{\lVert}{\rVert}
\let\oldabs\abs
\def\abs{\@ifstar{\oldabs}{\oldabs*}}
\let\oldnorm\norm
\def\norm{\@ifstar{\oldnorm}{\oldnorm*}}
\newcommand{\overbar}[1]{{\mkern 1.5mu\overline{\mkern-1.5mu#1\mkern-1.5mu}\mkern 1.5mu}}
\newcommand{\defeq}{\vcentcolon=}
\title[A Short Proof of an  Identity Related to Type IV Superorthogonality]{A Short Proof of an Identity Related to Type IV Superorthogonality}
\author{Yixuan Pang}
\address[Y. Pang]{Department of Mathematics, University of Pennsylvania, Philadelphia, PA 19104}
\email{pyixuan@sas.upenn.edu}
\begin{document}

\begin{abstract}
    We provide a much shorter but even more powerful proof of an algebraic identity, which can be used to establish the direct and the converse inequality under Type IV superorthogonality. As an application, we obtain the optimal order of the formal constant in the direct inequality. This order turns out to be also sharp for Type III superorthogonality. When $p\geq 2$, our result recovers the optimal order of the constant in the Burkholder-Gundy inequality in martingale theory, and also recovers the currently best order of the constant in the reverse Littlewood-Paley inequality. We also introduce variants of Type IV superorthogonality, under which we prove the converse inequality.
\end{abstract}

\maketitle

\tableofcontents 

\section{Introduction}
Square function estimates appear almost everywhere in harmonic analysis. Let $(X,\mu)$ be a $\sigma$-finite measure space, and let $\{f_l\}_{1\leq l\leq L}$ be a finite family of complex-valued functions in $L^{2r}(X,d\mu)$, where $r\in \N^+$. We consider the direct inequality
\begin{align}\label{direct}
    \norm{\sum_{l=1}^L f_l}_{2r} \lesssim_r \norm{\left(\sum_{l=1}^L\abs{f_l}^2\right)^{\frac{1}{2}}}_{2r}
\end{align}
and the converse inequality
\begin{align}\label{converse}
    \norm{\left(\sum_{l=1}^L\abs{f_l}^2\right)^{\frac{1}{2}}}_{2r}
    \lesssim_r 
    \norm{\sum_{l=1}^L f_l}_{2r}.
\end{align}
As $r\in \N^+$, we can expand both sides algebraically. This leads to the concept of ``superorthogonality'', which tells us to what extent elements in the family $\{f_l\}_{1\leq l\leq L}$ satisfy the vanishing property:
\begin{align}\label{condition}
    \int_X f_{l_1}\overbar{f_{l_2}} \cdots f_{l_{2r-1}}\overbar{f_{l_{2r}}} d\mu = 0.
\end{align}
Such type of condition can unify and reinterpret many classical results in harmonic analysis, and is very useful when dealing with some discrete operators. Notably, \cite{PierceOnSuperorthogonality}\cite{gressman2023new} introduced a hierarchy of superorthogonality:
\begin{itemize}
    \item Type I$^*$: (\ref{condition}) holds whenever $(l_{2k-1})_{1\leq k\leq r}$ is not a permutation of $(l_{2k})_{1\leq k\leq r}$.
    \item Type I: (\ref{condition}) holds whenever some $l_j$ appears an odd time in $l_1,\dots, l_{2r}$.
    \item Type II: (\ref{condition}) holds whenever some $l_j$ appears precisely once in $l_1,\dots, l_{2r}$.
    \item Type III: (\ref{condition}) holds whenever some $l_j$ is strictly larger all other indices in $l_1,\dots, l_{2r}$.
    \item Type IV: (\ref{condition}) holds whenever $l_1,\dots, l_{2r}$ are all distinct.
\end{itemize}
The assumptions above on the family $\{f_l\}_{1\leq l\leq L}$ get weaker from top to bottom, and the reader may consult \cite{PierceOnSuperorthogonality}\cite{gressman2023new} for detailed historical remarks. Recently, \cite{gressman2023new} showed that Type IV superorthogonality implies the direct inequality (\ref{direct}). For the converse inequality (\ref{converse}), as $\ell^{2}\xhookrightarrow{} \ell^{2r}$, a necessary condition is:
\begin{align}\label{necessary}
    \norm{\left(\sum_{l=1}^L\abs{f_l}^{2r}\right)^{\frac{1}{2r}}}_{2r}
    \lesssim_r 
    \norm{\sum_{l=1}^L f_l}_{2r}.
\end{align}

In practice, (\ref{necessary}) is much weaker and easier to prove than (\ref{converse}). Therefore, we can raise the question: Suppose (\ref{necessary}) holds, then what is the weakest type of superorthogonality we need to establish (\ref{converse})?

A classical work due to Paley, as explained in \cite{PierceOnSuperorthogonality}, proved that Type III suffices. Recently, \cite{2023arXiv231201717Z} showed that we actually only need Type IV, and generalized everything to the vector-valued version, using a ``folding-the-circle'' trick. For simplicity, here we only record the main result in \cite{2023arXiv231201717Z} in the complex-valued case:
\begin{thm}[\cite{2023arXiv231201717Z}]\label{main thm}
    Fix $r\in\N^+$ and a $\sigma$-finite measure space $(X,d\mu)$. Assume that $\{f_l\}_{1\leq l\leq L}$ is a finite family of complex-valued functions in $L^{2r}(X,d\mu)$, which satisfies Type IV superorthogonality:
    \begin{align}
        \int_X f_{l_1}\overbar{f_{l_2}} \cdots f_{l_{2r-1}}\overbar{f_{l_{2r}}} d\mu = 0 \,\,\, \text{whenever $l_1,\dots, l_{2r}$ are all distinct}.
    \end{align}
    Then we have
    \begin{align*}
        \norm{\left(\sum_{l=1}^L\abs{f_l}^2\right)^{\frac{1}{2}}}_{2r}
        \lesssim_r 
        \norm{\sum_{l=1}^L f_l}_{2r},
    \end{align*}
    provided that (\ref{necessary}) holds.
\end{thm}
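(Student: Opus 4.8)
The plan is to pass to the $2r$-th power. Since both sides of the desired estimate are nonnegative, it is equivalent to prove
\[
\int_X\Big(\sum_{l=1}^L|f_l|^2\Big)^{r}\,d\mu \;\lesssim_r\; \int_X\Big|\sum_{l=1}^L f_l\Big|^{2r}\,d\mu ,
\]
and in the same way the hypothesis (\ref{necessary}) now reads $\sum_{l=1}^L\norm{f_l}_{2r}^{2r}\lesssim_r\norm{\sum_{l=1}^L f_l}_{2r}^{2r}$. Throughout I write $S_k=\sum_{l\le k}f_l$ for the partial sums (so $S_0=0$, $S_L=\sum_l f_l$, $f_k=S_k-S_{k-1}$) and set $G=\sum_{l}|f_l|^2$.

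The core is an algebraic identity for $\int_X|S_L|^{2r}\,d\mu$. Expanding multilinearly,
\[
\int_X\Big|\sum_{l}f_l\Big|^{2r}\,d\mu=\sum_{a_1,\dots,a_r,\,b_1,\dots,b_r}\int_X f_{a_1}\cdots f_{a_r}\,\overline{f_{b_1}}\cdots\overline{f_{b_r}}\,d\mu ;
\]
because pointwise multiplication commutes, every summand has, after reordering, the shape that appears in (\ref{condition}), so Type IV superorthogonality kills each term in which the $2r$ indices $a_1,\dots,a_r,b_1,\dots,b_r$ are pairwise distinct. I split the rest into the balanced tuples, those with $\{a_1,\dots,a_r\}=\{b_1,\dots,b_r\}$ as multisets, and the unbalanced tuples. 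A multinomial count gives that the balanced part equals $\sum_{M}\big(r!/\prod_s m_s!\big)^2\int_X\prod_s|f_{d_s}|^{2m_s}\,d\mu$, the sum taken over multisets $M$ with distinct entries $d_s$ of multiplicities $m_s$ summing to $r$; comparing this with $\int_X G^r\,d\mu=\sum_{M}\big(r!/\prod_s m_s!\big)\int_X\prod_s|f_{d_s}|^{2m_s}\,d\mu$ and using $r!/\prod_s m_s!\ge 1$ together with positivity of the integrals, the balanced part dominates $\int_X G^r\,d\mu$ term by term. Hence
\[
\int_X G^r\,d\mu\;\le\;\int_X|S_L|^{2r}\,d\mu+\big|\text{unbalanced part}\big| ,
\]
and the problem is reduced to bounding the unbalanced part by $C_r\int_X|S_L|^{2r}\,d\mu$.

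This last bound is the step I expect to be the main obstacle, because a naive estimate — triangle inequality followed by Hölder's inequality in the repeated indices — is far too lossy: the number of unbalanced tuples is not controlled independently of the number $L$ of functions, so the resulting constant degrades with $L$, and genuine cancellation must be exploited. Here one uses the ordering that the \emph{partial-sum} form of the identity supplies. Telescoping $|S_k|^2-|S_{k-1}|^2=|f_k|^2+2\,\mathrm{Re}(\overline{S_{k-1}}f_k)$ yields $G=|S_L|^2-2\,\mathrm{Re}\,R$ with $R:=\sum_{k=1}^L f_k\overline{S_{k-1}}=\sum_{l<k}f_k\overline{f_l}$, and raising this to the $r$-th power expresses the unbalanced part as a bounded (in $r$) combination of integrals of the form $\int_X|S_L|^{2(r-j)}\,R^{i}\,\overline{R}^{j-i}\,d\mu$. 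In each of these one re-sums the inner sums largest-index-first, so that $\sum_{l<k}(\cdots)$ becomes a sum over $k$ of $(\cdots)$ tested against the partial sum $S_{k-1}$; this converts the $L$-dependent tally of monomials into integrals against partial sums, and a nested repetition of the same identity — equivalently, a summation by parts — controls the quantities $\int_X|R|^{2}(\cdots)\,d\mu$ that arise. What is left are genuinely off-diagonal integrals, which are estimated by Hölder's inequality and then absorbed into $\int_X|S_L|^{2r}\,d\mu$ using the hypothesis $\sum_l\norm{f_l}_{2r}^{2r}\lesssim_r\norm{S_L}_{2r}^{2r}$, and square-function integrals that, on re-summing the repeated index, collapse to a constant multiple of $\int_X G^r\,d\mu$. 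I would organize this as an induction on $r$, arranging the bookkeeping so that the constant multiplying $\int_X G^r\,d\mu$ stays below $1$; carried out carefully, the same bookkeeping delivers the optimal order of $C_r$ in $r$ promised in the abstract.

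Finally, collecting everything, $\int_X G^r\,d\mu\le\int_X|S_L|^{2r}\,d\mu+\big|\text{unbalanced part}\big|\le(1+C_r')\int_X|S_L|^{2r}\,d\mu+\tfrac12\int_X G^r\,d\mu$, whence $\int_X G^r\,d\mu\lesssim_r\int_X|S_L|^{2r}\,d\mu$; taking $2r$-th roots recovers (\ref{converse}), as desired.
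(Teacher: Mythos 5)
Your first step is fine, and in fact clean: after expanding $\norm{\sum_l f_l}_{2r}^{2r}$ multilinearly, Type IV kills exactly the tuples with all $2r$ indices distinct, and for the balanced tuples (unbarred multiset $=$ barred multiset) each integrand is a nonnegative product of powers of $\abs{f_{d_s}}^2$, so the count $\bigl(r!/\prod_s m_s!\bigr)^2 \geq r!/\prod_s m_s!$ does give that the balanced part dominates $\int_X G^r\,d\mu$; this plays the role that the double partitions and the folding-the-circle lemma play in the paper. The problem is that the entire weight of the theorem sits on the step you yourself flag as the main obstacle: controlling the unbalanced, non-distinct part. What you give there is a plan, not an argument. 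Telescoping $G=\abs{S_L}^2-2\,\mathrm{Re}\,R$, ``re-summing largest-index-first,'' a ``nested repetition of the same identity -- equivalently, a summation by parts,'' an induction on $r$ with unspecified ``bookkeeping'' asserted to keep the coefficient of $\int_X G^r\,d\mu$ below $1$, and a final absorption with the factor $\tfrac12$ that is never derived: none of these cancellations is exhibited. Moreover, Type IV superorthogonality is symmetric in the indices and only gives vanishing when \emph{all} $2r$ indices are distinct; the ordered, partial-sum telescoping you invoke is precisely the kind of argument that classically relies on Type III (largest-index) or martingale structure, so the step where you would have to show that Type IV alone suffices -- which is the whole content of the theorem -- is exactly the step left blank.

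For comparison, the paper closes this gap with the algebraic identity (Theorem \ref{main result} / Corollary \ref{identity_multilinear}): the distinct sum over $\PP_0$ is rewritten as a $\Z$-combination, with explicit coefficients $\DD(\PP)$, of \emph{independent} sums over the coarser partitions $\PP$. This converts your problematic off-diagonal tuple sums into finitely many (in number depending only on $r$, never on $L$) factorized expressions: each singleton part contributes a factor $\sum_l f_l$ or its conjugate, and each part of size $k\geq 2$ a factor bounded by $\sum_l\abs{f_l}^{k}$. The double partitions are then handled pointwise by the folding-the-circle lemma of \cite{2023arXiv231201717Z}, the single partitions by H\"older together with the hypothesis (\ref{necessary}) (this is where that hypothesis genuinely enters), and the joint partitions by $\ell^2\hookrightarrow\ell^{k}$ plus H\"older; this is the proof of Theorem \ref{converse_variant}, whose $s=0$ case is the statement at hand. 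If you wish to keep your balanced/unbalanced decomposition, you still need to route the unbalanced part through such an identity (or reproduce the argument of \cite{2023arXiv231201717Z}) to obtain $L$-independent control; as written, your second step is a genuine gap rather than a different proof.
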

A key ingredient in the proof of Theorem \ref{main thm} in \cite{2023arXiv231201717Z} is an algebraic identity, whose original proof is inspired by arguments in \cite{gressman2023new} and doesn't provide precise information about the coefficients.

In this paper, we provide a much shorter but even more powerful proof of the algebraic identity, see Theorem \ref{main result} and the remarks following it. Our proof is inspired by enumerative combinatorics and provides precise information of all the coefficients, by which we obtain the sharp order of the formal constant in the direct inequality. This order turns out to be also sharp for Type III superorthogonality. Our result recovers the optimal order of the constant in the Burkholder-Gundy inequality in martingale theory, and also recovers the currently best order of the constant in the reverse Littlewood-Paley inequality when $p\geq 2$.

\subsection*{Outline of the paper} In Section \ref{sec:notation}, we list our basic notation; in Section \ref{sec:concepts}, we introduce the key combinatorial concepts, which lays the foundation of the paper; in Section \ref{sec:main result}, we state and prove the main theorem; in Section \ref{sec:coefficients}, we explicitly calculate the coefficients using combinatorial tools; in Section \ref{sec:application}, we offer various applications, especially in martingale theory and Littlewood-Paley theory; in Section \ref{sec:directions}, we discuss some possible further directions.

\subsection*{Acknowledgements}
The author would like to thank Jianghao Zhang for introducing him to superorthogonality, for helpful math discussions, and for many comments on an early manuscript. The author is also grateful to Prof. Philip Gressman for many valuable suggestions and conversations.

\section{Basic notation}\label{sec:notation}
\begin{itemize}
    \item $[n]\defeq \{1,\dots,n\},\forall\, n\in\N^+$.
    \item $\binom{n}{m} \defeq \frac{n!}{m!(n-m)!},\forall\, 0\leq m \leq n$ ($m,n\in\Z$). This is called the ``binomial coefficients''.
    \item $[x]_{(n)}\defeq x(x-1)(x-2)\cdots(x-n+1),\forall\,n\in\N^+$, $x$ an indeterminate. This is called the ``falling factorials''.
    \item For a finite set $A$, let $\abs{A}$ denote the cardinality of $A$.
    \item For a family of sets $\PP$ with elements denoted by $P$, the notation $\displaystyle \sum_{(l_P)_{P\in\PP}}$ represents the sum over indices $1\leq l_P\leq L$, $P\in\PP$, which are independent with each other. We call it an ``independent'' sum.
    \item For a family of sets $\PP$ with elements denoted by $P$, the notation $\displaystyle \sum_{(l_P)_{P\in\PP}}^*$ represents the sum over indices $1\leq l_P\leq L$, $P\in\PP$, which are all distinct. We call it a ``distinct'' sum.
\end{itemize}

\section{Key concepts}\label{sec:concepts}
All the concepts introduced in this section are natural in combinatorics.
\begin{defn}[Partition]
    A partition $\PP$ of $[n]$ is a collection of non-empty disjoint subsets of $[n]$ with $[n]=\cup_{P\in\PP}P$.
\end{defn}
For simplicity, we adopt the notation:
\begin{itemize}
    \item $\FF$: the family of all partitions $\PP$ of $[n]$.
    \item  $\PP_0\defeq \{\{1\},\cdots, \{n\}\}$.
    \item For $\PP\in\FF$ and each $j\in [n]$, let $\PP(j)$ denote the unique $P\in\PP$ such that $j\in P$.
\end{itemize}

\begin{defn}[Type of partitions]\label{type_partition}
    For any $\PP\in\FF$ and $i\in\N^+$, let $\#_i^\PP\defeq \abs{\{P\in\PP: \abs{P}=i\}}$.
    Define an equivalence relation ``$\sim$'' over $\FF$: For $\PP_1,\PP_2\in \FF$, we say $\PP_1\sim\PP_2$ whenever $\#_i^{\PP_1} = \#_i^{\PP_2}$, $\forall\, i\in\N^+$. Each equivalent class is a \textit{type}. For each $\PP\in\FF$, we can use an unordered tuple $((\PP))\defeq (\abs{P})_{P\in\PP}$ to represent the type of $\PP$.
\end{defn}

\begin{defn}[Partial order]\label{partial order}
    For $\PP_1,\PP_2\in\FF$, we say that $\PP_1$ is \textit{finer} than $\PP_2$ (or $\PP_2$ is \textit{coarser} than $\PP_1$), if for any $P\in\PP_1$, there exists some $P'\in\PP_2$ such that $P\subseteq P'$. In this case we may write ``$\PP_1 \leq \PP_2$'' (or ``$\PP_2 \geq \PP_1$''). We also write ``$\PP_1 < \PP_2$''(or ``$\PP_2 > \PP_1$'') when $\PP_1 \leq \PP_2$ but $\PP_1 \neq \PP_2$.
\end{defn}

\begin{defn}[Chain]\label{chain}
    A nonempty subfamily $\CC \subseteq \FF$ is called a \textit{chain}, if it's a totally ordered set in $(\FF,<)$.
\end{defn}

For simplicity, we adopt the notation:
\begin{itemize}
    \item $\GG$: the collection of all chains $\CC$ in $\FF$.
    \item $\GG(\PP_1,\PP_2)\defeq \{\CC\in\GG: \PP_1\textit{ is the finest in } \CC, \PP_2\textit{ is the coarsest in } \CC\}$.
\end{itemize}

\begin{defn}[Odd/even chain]
    We call $\CC\in\GG$ an \textit{odd} chain if $\abs{\CC}$ is odd, and an \textit{even} chain if $\abs{\CC}$ is even.
\end{defn}

\begin{defn}
    For any $\PP_1\leq\PP_2$ in $(\FF,\leq)$, define:
    \begin{itemize}
        \item $\OO(\PP_1,\PP_2)$: the total number of odd chains in $\GG(\PP_1,\PP_2)$.
        \item $\EE(\PP_1,\PP_2)$: the total number of even chains in $\GG(\PP_1,\PP_2)$.
        \item $\DD(\PP_1,\PP_2)\defeq \OO(\PP_1,\PP_2)-\EE(\PP_1,\PP_2)$.
    \end{itemize}
    In particular, when $\PP_1=\PP_0$, we abbreviate $\DD(\PP_0,\PP)$ to $\DD(\PP)$.
\end{defn}

Before ending this section, we offer a self-evident observation:
\begin{obs}[Symmetry]\label{symmetry}
    If $\PP_1\sim\PP_2$ in $\FF$, then $\DD(\PP_1) = \DD(\PP_2)$.
\end{obs}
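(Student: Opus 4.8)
The plan is to exploit the natural action of the group of permutations of $[n]$ on $\FF$ by relabeling the ground set. For a permutation $\sigma$ of $[n]$ and $\PP\in\FF$, set $\sigma\cdot\PP\defeq\{\sigma(P):P\in\PP\}$, where $\sigma(P)\defeq\{\sigma(j):j\in P\}$; this is again a partition of $[n]$. First I would record the elementary structural facts about this action: (i) it preserves block sizes, so $((\sigma\cdot\PP))=((\PP))$, i.e.\ $\sigma\cdot\PP\sim\PP$; (ii) it is order-preserving and order-reflecting, that is, $\PP_1\leq\PP_2$ if and only if $\sigma\cdot\PP_1\leq\sigma\cdot\PP_2$, which follows by applying Definition \ref{partial order} to $\sigma$ and then to $\sigma^{-1}$; (iii) it fixes $\PP_0$, since $\sigma$ permutes the singletons of $[n]$ among themselves. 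Each of these is immediate from the definitions.

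Next I would observe the converse of (i): if $\PP_1\sim\PP_2$, then there is a permutation $\sigma$ of $[n]$ with $\sigma\cdot\PP_1=\PP_2$. Indeed, since $\#_i^{\PP_1}=\#_i^{\PP_2}$ for every $i\in\N^+$, one can pair up the blocks of $\PP_1$ with the blocks of $\PP_2$ so that paired blocks have equal cardinality; choosing, for each such pair, an arbitrary bijection between the two blocks and taking the union of all these bijections yields a permutation $\sigma$ of $[n]$ with $\sigma\cdot\PP_1=\PP_2$.

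Finally I would combine these observations. Fix $\PP_1\sim\PP_2$ and choose $\sigma$ as in the previous paragraph. By (ii), the map $\CC\mapsto\sigma\cdot\CC\defeq\{\sigma\cdot\PP:\PP\in\CC\}$ sends chains to chains and is a bijection of $\GG$ onto itself (with inverse induced by $\sigma^{-1}$); moreover it sends the finest (resp.\ coarsest) element of a chain to the finest (resp.\ coarsest) element of the image chain, and it obviously preserves $\abs{\CC}$, hence the parity of $\abs{\CC}$. Using (iii), it therefore restricts to a bijection $\GG(\PP_0,\PP_1)\to\GG(\sigma\cdot\PP_0,\sigma\cdot\PP_1)=\GG(\PP_0,\PP_2)$ that preserves parity of chain length. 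Consequently $\OO(\PP_0,\PP_1)=\OO(\PP_0,\PP_2)$ and $\EE(\PP_0,\PP_1)=\EE(\PP_0,\PP_2)$, so $\DD(\PP_1)=\DD(\PP_2)$, as claimed. There is no genuine obstacle here, every step being a direct unwinding of the definitions; the only point deserving a line of care is the converse of (i), i.e.\ that equality of the type invariants $(\#_i^\PP)_{i\in\N^+}$ is enough to produce an honest relabeling permutation, which is the standard fact that two set partitions of $[n]$ lie in the same orbit under relabelings precisely when they have the same type.
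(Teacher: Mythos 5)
Your argument is correct: the relabeling action of the symmetric group on partitions, together with the facts that it is an order isomorphism of $(\FF,\leq)$ fixing $\PP_0$ and that partitions of equal type lie in one orbit, yields a parity-preserving bijection $\GG(\PP_0,\PP_1)\to\GG(\PP_0,\PP_2)$ and hence $\DD(\PP_1)=\DD(\PP_2)$. The paper states Observation \ref{symmetry} without proof as self-evident, and your write-up is exactly the standard formalization of that intended symmetry argument, so there is nothing to add.
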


\section{The algebraic identity}\label{sec:main result}
In this section we state and prove our main theorem:
\begin{thm}\label{main result}
    Given $n\in \N^+$, suppose  $V_j$, $j\in [n]$, are $n$ arbitrary vector spaces over a field $\F$, and each $V_j$ contains $L$ vectors $v_{j,l_j}$, $1\leq l_j\leq L$. Then for any $\PP_1\in\FF$, we have
    \begin{align}\label{algebraic}
        \sum_{(l_P)_{P\in \PP_1}}^* v_{1,l_{\PP_1(1)}}\otimes\cdots\otimes v_{n,l_{\PP_1(n)}}
        = \sum_{\PP\geq\PP_1}\DD(\PP_1,\PP) \sum_{(l_P)_{P\in \PP}} v_{1,l_{\PP(1)}}\otimes\cdots\otimes v_{n,l_{\PP(n)}}.
    \end{align}
\end{thm}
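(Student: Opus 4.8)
The plan is to prove the identity by \emph{Möbius inversion} on the partition lattice, viewing both sides as functions of partitions and reducing everything to a clean combinatorial statement about chains. First I would rewrite the right-hand side's inner ``independent'' sum in terms of ``distinct'' sums: for a fixed partition $\PP$, grouping indices according to which parts receive equal values gives
\begin{align*}
    \sum_{(l_P)_{P\in\PP}} v_{1,l_{\PP(1)}}\otimes\cdots\otimes v_{n,l_{\PP(n)}}
    = \sum_{\QQ\geq\PP} \sum_{(l_Q)_{Q\in\QQ}}^* v_{1,l_{\QQ(1)}}\otimes\cdots\otimes v_{n,l_{\QQ(n)}}.
\end{align*}
So if we set $S_\PP \defeq \sum_{(l_P)_{P\in\PP}}^* v_{1,l_{\PP(1)}}\otimes\cdots\otimes v_{n,l_{\PP(n)}}$ and $T_\PP \defeq \sum_{(l_P)_{P\in\PP}} v_{1,l_{\PP(1)}}\otimes\cdots\otimes v_{n,l_{\PP(n)}}$, then $T_\PP=\sum_{\QQ\geq\PP}S_\QQ$, and the claim \eqref{algebraic} becomes $S_{\PP_1}=\sum_{\PP\geq\PP_1}\DD(\PP_1,\PP)\,T_\PP$. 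Substituting the first relation into the second, the identity to prove reduces to the purely numerical assertion
\begin{align*}
    \sum_{\PP:\,\PP_1\leq\PP\leq\QQ}\DD(\PP_1,\PP) = \mathbbold{1}[\QQ=\PP_1]
    \qquad\text{for all }\QQ\geq\PP_1,
\end{align*}
since then the coefficient of each $S_\QQ$ on the right collapses to $1$ if $\QQ=\PP_1$ and $0$ otherwise. (One must check this substitution is legitimate, i.e. that the $S_\QQ$, $\QQ\geq\PP_1$, are ``linearly independent enough'' as formal objects; this is harmless because one can specialize the $v_{j,l}$ to distinct formal symbols so that no cancellation between different $\QQ$ occurs.)

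Next I would prove that numerical identity. The quantity $\DD(\PP_1,\PP) = \OO(\PP_1,\PP)-\EE(\PP_1,\PP)$ is exactly $-\sum_{k\geq 1}(-1)^k c_k(\PP_1,\PP)$ where $c_k$ counts chains of length $k$ from $\PP_1$ up to $\PP$ with both endpoints included; equivalently, if we let $\zeta$ be the incidence function of the interval $[\PP_1,\QQ]$ and write chains with strictly increasing entries, then $\DD(\PP_1,\PP)$ is the signed chain-counting generating function evaluated at $-1$. The standard fact (Philip Hall's theorem / the combinatorial formula for the Möbius function) is that $\mu(\PP_1,\PP)=\sum_k(-1)^k c_k'(\PP_1,\PP)$, where $c_k'$ counts chains $\PP_1=\QQ_0<\QQ_1<\cdots<\QQ_k=\PP$; matching conventions carefully (the chains in $\GG(\PP_1,\PP)$ are required to have $\PP_1$ finest and $\PP$ coarsest, so $|\CC|$ ranges over $1,2,\dots$ and a length-$k$ chain in the Hall sense corresponds to $|\CC|=k+1$), one gets the clean relation $\DD(\PP_1,\PP)=-\mu(\PP_1,\PP)$ for $\PP>\PP_1$ and $\DD(\PP_1,\PP_1)=1=-\mu\cdot(-1)$—I would pin down the exact sign bookkeeping at this point. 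Then $\sum_{\PP_1\leq\PP\leq\QQ}\DD(\PP_1,\PP)$ is, up to the handling of the diagonal term, $\sum_{\PP_1\leq\PP\leq\QQ}\mu(\PP_1,\PP)$, which is $\mathbbold{1}[\QQ=\PP_1]$ by the defining property of the Möbius function; reconciling the diagonal convention yields exactly the asserted Kronecker delta.

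Alternatively, and perhaps more in the ``enumerative combinatorics'' spirit the author advertises and avoiding any appeal to Möbius theory, I would prove $\sum_{\PP_1\leq\PP\leq\QQ}\DD(\PP_1,\PP)=\mathbbold{1}[\QQ=\PP_1]$ directly by a sign-reversing involution on the set of all chains in $[\PP_1,\QQ]$ that start at $\PP_1$: given such a chain $\CC$, look at whether $\QQ\in\CC$, and toggle its membership (insert $\QQ$ at the top if absent, delete it if present). This pairs an odd chain with an even chain and changes the top element, hence this cancels everything except the fixed points of the involution, which are precisely chains that cannot be toggled—and the only such chain is $\CC=\{\PP_1\}$ when $\QQ=\PP_1$. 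Summing the $\pm1$ contributions gives the Kronecker delta. This involutive argument is the cleanest route and I would present it as the main lemma.

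The main obstacle I anticipate is \emph{sign and convention bookkeeping}: the definition of $\GG(\PP_1,\PP)$ fixes the finest and coarsest elements of the chain (so a single-element chain $\{\PP_1\}$ is the relevant base case when $\PP=\PP_1$), and one must be meticulous that ``odd chain'' means $|\CC|$ odd while the Möbius/Hall formula is usually phrased in terms of the number of \emph{steps}, which is $|\CC|-1$. Getting a consistent parity convention so that the diagonal term $\DD(\PP_1,\PP_1)=1$ falls out correctly, and ensuring the telescoping in $T_\PP=\sum_{\QQ\geq\PP}S_\QQ$ is applied over the correct sub-interval, are the places where an off-by-one sign would wreck the argument; everything else is routine once the reduction above is in place.
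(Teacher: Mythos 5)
Your proposal is correct, but it takes a genuinely different route from the paper. The paper proves the identity by a single induction on $\abs{\PP_1}$: it writes the distinct sum as the independent sum minus all strictly coarser distinct sums, applies the induction hypothesis to the latter, and observes that prepending $\PP_2>\PP_1$ to a chain in $\GG(\PP_2,\PP)$ flips its parity, which is exactly how the coefficients $\DD(\PP_1,\PP)$ assemble. You instead invert the (same) relation $T_\PP=\sum_{\QQ\geq\PP}S_\QQ$ directly, reducing everything to the numerical lemma $\sum_{\PP_1\leq\PP\leq\QQ}\DD(\PP_1,\PP)=\mathbbold{1}[\QQ=\PP_1]$, which you prove by a sign-reversing involution (toggle the top element $\QQ$ in a chain with minimum $\PP_1$); that involution argument is complete and correct, and since you only need the coefficient identity to collapse the right-hand side, the worry about linear independence of the $S_\QQ$ is superfluous. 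What your route buys is transparency: it is Philip Hall's theorem in disguise, and it identifies $\DD(\PP_1,\PP)$ with the M\"obius function $\mu(\PP_1,\PP)$ of the partition lattice, which would hand you the paper's explicit coefficient formula (its Theorem on $\DD(\PP)=(-1)^{n-m}\prod_i(n_i-1)!$) for free from the classical formula for $\mu$, rather than via the separate Stirling-number induction the paper performs; what the paper's route buys is self-containedness and brevity with no lattice-theoretic input. One caveat: in your M\"obius bookkeeping the provisional sign is wrong --- a chain $\CC\in\GG(\PP_1,\PP)$ with $\abs{\CC}=k+1$ elements is a Hall chain with $k$ steps, so $\DD(\PP_1,\PP)=+\mu(\PP_1,\PP)$ for all $\PP\geq\PP_1$ (consistent with $\DD(\PP_1,\PP_1)=1$ and with the paper's explicit formula), and with the sign $-\mu$ you tentatively wrote the Kronecker-delta lemma would fail; since you flagged this and your designated main argument is the involution, this is a blemish rather than a gap.
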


\begin{proof}[Proof of Theorem \ref{main result}]
    We will induct on $\abs{\PP_1}$. The base case when $\abs{\PP_1} = 1$ is trivially verified.
    
    The basic idea is, whenever we encounter a distinct sum, we replace it with its corresponding independent sum and subtract all ``coarser'' distinct sums. This yields the key observation:
    \begin{align}\label{first_step}
        \sum_{(l_P)_{P\in \PP_1}}^* v_{1,l_{\PP_1(1)}}\otimes\cdots\otimes v_{n,l_{\PP_1(n)}}
        =& \sum_{(l_P)_{P\in \PP_1}} v_{1,l_{\PP_1(1)}}\otimes\cdots\otimes v_{n,l_{\PP_1(n)}}\nonumber\\
        &- \sum_{\PP_2 > \PP_1} \sum_{(l_P)_{P\in \PP_2}}^* v_{1,l_{\PP_2(1)}}\otimes\cdots\otimes v_{n,l_{\PP_2(n)}}.
    \end{align}

    As $\abs{\PP_2}<\abs{\PP_1}$, we can apply induction hypothesis to the distinct sum in (\ref{first_step}) to obtain
    \begin{align*}
        &\sum_{(l_P)_{P\in \PP_1}}^* v_{1,l_{\PP_1(1)}}\otimes\cdots\otimes v_{n,l_{\PP_1(n)}}\\
        =& \sum_{(l_P)_{P\in \PP_1}} v_{1,l_{\PP_1(1)}}\otimes\cdots\otimes v_{n,l_{\PP_1(n)}}\nonumber
        - \sum_{\PP_2 > \PP_1} \left(\sum_{\PP\geq\PP_2}\DD(\PP_2,\PP) \sum_{(l_P)_{P\in \PP}} v_{1,l_{\PP(1)}}\otimes\cdots\otimes v_{n,l_{\PP(n)}}\right)\\
        =& \sum_{(l_P)_{P\in \PP_1}} v_{1,l_{\PP_1(1)}}\otimes\cdots\otimes v_{n,l_{\PP_1(n)}}\nonumber
        - \left(\sum_{\PP_2: \PP\geq \PP_2 > \PP_1}\DD(\PP_2,\PP)\right) \sum_{(l_P)_{P\in \PP}} v_{1,l_{\PP(1)}}\otimes\cdots\otimes v_{n,l_{\PP(n)}}\\
        \overset{\triangle}{=}&\sum_{(l_P)_{P\in \PP_1}} v_{1,l_{\PP_1(1)}}\otimes\cdots\otimes v_{n,l_{\PP_1(n)}}\nonumber + 
        \sum_{\PP > \PP_1}\DD(\PP_1,\PP) \sum_{(l_P)_{P\in \PP}} v_{1,l_{\PP(1)}}\otimes\cdots\otimes v_{n,l_{\PP(n)}}\\
        =&\sum_{\PP \geq \PP_1}\DD(\PP_1,\PP) \sum_{(l_P)_{P\in \PP}} v_{1,l_{\PP(1)}}\otimes\cdots\otimes v_{n,l_{\PP(n)}}.
    \end{align*}
    Here in $\triangle$ we naturally correspond $\GG(\PP_2,\PP)$ to $\GG(\PP_1,\PP)$ by adding $\PP_2>\PP_1$, only with the parity of chains changed, which is why ``$-$'' becomes ``$+$''. So the proof is completed.
\end{proof}

There are several noteworthy features of Theorem \ref{main result} and its proof:
\begin{enumerate}
    \item Our identity (\ref{algebraic}) is actually more powerful than that in \cite[Theorem 4]{2023arXiv231201717Z}, as we give precise characterization of \textit{all} the coefficients, which helps us better understand the formal constant in square function estimates (Section \ref{subsec:constant} and \ref{subsec:applications}). Also, we don't require $\PP_1 = \PP_0$, which offers additional flexibility in other applications (Section \ref{subsec:variants}).
    \item Despite being more powerful, our proof of (\ref{algebraic}) is even much simpler. In fact, the original proof in \cite{2023arXiv231201717Z} takes around three pages, while our proof is within just half a page. Notably, the original proof involves three successive induction procedures, while our proof only uses one induction procedure.
    \item Besides being more powerful and concise, our proof builds a bridge between superorthogonality and enumerative combinatorics in a natural and elegant way, which should be helpful in understanding the intrinsic structure of superorthogonality.
\end{enumerate}

Before ending this section, we point out that Theorem \ref{main result} immediately yields an identity for multilinear functions, which is easier to use in practice.
\begin{cor}\label{identity_multilinear}
    Given $n\in\N^+$, suppose $V_j$, $j\in[n]$, are $n$ arbitrary vector spaces over a field $\F$, and each $V_j$ contains $L$ vectors $v_{j,l_j}$, $1\leq l_j\leq L$. Then for any $\PP_1\in\FF$ and any $n$-linear function $\Lambda: V_1\times\cdots\times V_n\rightarrow \F$, we have
    \begin{align*}
        \sum_{(l_P)_{P\in\PP_1}}^* \Lambda(v_{1,\PP_1(1)}, \cdots, v_{n,\PP_1(n)}) = \sum_{\PP\geq\PP_1}
        \DD(\PP_1,\PP) \sum_{(l_P)_{P\in\PP}} \Lambda(v_{1,\PP(1)}, \cdots, v_{n,\PP(n)}).
    \end{align*}
\end{cor}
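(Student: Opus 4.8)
The plan is to obtain Corollary \ref{identity_multilinear} as an immediate consequence of Theorem \ref{main result} together with the universal property of the tensor product. The point is that the identity (\ref{algebraic}) already lives in $V_1\otimes\cdots\otimes V_n$, and an $n$-linear form on $V_1\times\cdots\times V_n$ is precisely a linear functional on this tensor product.

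First I would recall that, since $\Lambda\colon V_1\times\cdots\times V_n\to\F$ is $n$-linear over $\F$, the universal property of $V_1\otimes_\F\cdots\otimes_\F V_n$ furnishes a unique $\F$-linear map $\widetilde\Lambda\colon V_1\otimes\cdots\otimes V_n\to\F$ with $\widetilde\Lambda(w_1\otimes\cdots\otimes w_n)=\Lambda(w_1,\dots,w_n)$ for all $w_j\in V_j$; this construction is valid for vector spaces over an arbitrary field, so no restriction on $\F$ is needed. Next I would apply $\widetilde\Lambda$ to both sides of (\ref{algebraic}). Every sum occurring there is finite: the outer sum ranges over the finite set $\{\PP\in\FF:\PP\geq\PP_1\}$, and each inner sum ranges over the finitely many tuples $(l_P)_{P\in\PP}$ with $1\le l_P\le L$. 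Hence linearity of $\widetilde\Lambda$ permits interchanging it with all of these sums and with the integer scalars $\DD(\PP_1,\PP)$. On the left-hand side this converts $\sum^{*}_{(l_P)_{P\in\PP_1}} v_{1,l_{\PP_1(1)}}\otimes\cdots\otimes v_{n,l_{\PP_1(n)}}$ into $\sum^{*}_{(l_P)_{P\in\PP_1}}\Lambda\bigl(v_{1,l_{\PP_1(1)}},\dots,v_{n,l_{\PP_1(n)}}\bigr)$, and on the right-hand side it does the analogous thing termwise; comparing the two sides yields exactly the stated formula.

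There is essentially no obstacle here: the only things that must be spelled out are the existence of $\widetilde\Lambda$ (universal property of $\otimes$) and the finiteness of every sum involved (guaranteed by $L<\infty$ and $\abs{\FF}<\infty$), after which the conclusion is immediate. As an alternative route one could avoid tensor products entirely and simply re-run the single induction in the proof of Theorem \ref{main result} verbatim, with $\Lambda(\,\cdot\,,\dots,\,\cdot\,)$ in place of $(\,\cdot\,)\otimes\cdots\otimes(\,\cdot\,)$, since that argument used only the multilinearity of $\otimes$; I would nonetheless present the tensor-product factorization, as it is the shorter and more transparent derivation.
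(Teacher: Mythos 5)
Your proof is correct and matches the paper's intent: the paper offers no separate argument, asserting that Corollary \ref{identity_multilinear} follows immediately from Theorem \ref{main result}, and your factorization of $\Lambda$ through the tensor product via the universal property (plus finiteness of all sums) is exactly the standard way to make that immediacy precise.
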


\section{Calculation of the coefficients}\label{sec:coefficients}
For Type IV superorthogonality, \cite{gressman2023new} first established the direct inequality with a formal constant $2^{1/2}((2r)!-1)^{1/2}$, and raised a question: can we improve this formal constant significantly in general?

As pointed out in the previous section, \cite[Theorem 4]{2023arXiv231201717Z} doesn't shed much light on what general coefficients are, which makes it hard to bound the formal constants explicitly. Our alternative proof beats this drawback successfully: the coefficients are exactly $\DD(\PP_1,\PP)$. If we can calculate $\DD(\PP_1,\PP)$ explicitly, then we may get a better formal constant for the direct inequality, thus answering the question in \cite{gressman2023new} affirmatively. This is indeed the case, see Section \ref{subsec:constant}.

The most important case is $\DD(\PP)$, and we calculate it using combinatorial tools.

\begin{defn}[Stirling number of the second kind]
    A ``Stirling number of the second kind''(or ``Stirling partition number'') is the number of ways to partition a set of $n$ objects into $k$ non-empty subsets, and is denoted by $S(n,k)$.
\end{defn}
\begin{rmk}
    The $n$ objects are labelled, while the $k$ subsets are unlabelled.
\end{rmk}
For a systematic discussion of $S(n,k)$, one may consult \cite{mezo2019combinatorics} or Chapter V of \cite{comtet1974advanced}. Here we only record the property we will use without proof:
\begin{prop}[An identity involving the falling factorials]\label{property}
    \begin{align}\label{factorial_formula}
        \sum_{k=1}^n S(n,k)[x]_{(k)} = x^n.
    \end{align}
\end{prop}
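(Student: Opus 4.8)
The plan is to prove the Stirling identity $\sum_{k=1}^n S(n,k)[x]_{(k)} = x^n$ by a bijective (combinatorial) argument, since Proposition \ref{property} is stated precisely to fit the enumerative flavor of the paper. Both sides are polynomials in the indeterminate $x$ of degree $n$, so it suffices to verify the identity for all positive integers $x = m$; an equality of polynomials follows because two polynomials of degree $\le n$ agreeing at infinitely many points coincide. So I would fix $m \in \N^+$ and count, in two ways, the number of functions $f\colon [n] \to [m]$. On one hand this count is obviously $m^n$, the right-hand side with $x = m$.

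On the other hand, I would stratify the functions $f\colon [n]\to[m]$ by the size of their image. Given $f$ with $\abs{f([n])} = k$ (where $1 \le k \le n$), the data of $f$ is equivalent to: (i) the set partition of $[n]$ into the $k$ nonempty fibers $f^{-1}(\{y\})$, $y \in f([n])$ — there are $S(n,k)$ such unlabeled partitions — together with (ii) an injective assignment of the $k$ blocks to distinct elements of $[m]$, of which there are $m(m-1)\cdots(m-k+1) = [m]_{(k)}$. Multiplying and summing over $k$ gives $\sum_{k=1}^n S(n,k)[m]_{(k)}$ functions in total, which must equal $m^n$. This is exactly \eqref{factorial_formula} evaluated at $x = m$, and the polynomial identity follows.

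Alternatively, one can prove it by induction on $n$ using the standard recurrence $S(n,k) = S(n-1,k-1) + k\,S(n-1,k)$ together with the identity $[x]_{(k)} = [x]_{(k-1)}(x-k+1)$, matching coefficients; but the bijective proof is cleaner and more in keeping with the paper's spirit, so that is the route I would present.

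There is essentially no obstacle here: the only point requiring a word of care is the passage from "equality at all positive integers" to "equality of polynomials", and the bookkeeping that a function with image size $k$ decomposes uniquely as (unordered partition into fibers) $\times$ (injection of blocks into the codomain). Since the excerpt explicitly says the result is recorded "without proof", I would in fact simply cite \cite{mezo2019combinatorics} or \cite{comtet1974advanced} and omit the argument entirely, giving the two-line bijective sketch above only as a remark for the reader's convenience.
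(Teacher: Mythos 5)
Your proposal is correct: the paper deliberately records Proposition \ref{property} without proof, deferring to \cite{mezo2019combinatorics} and \cite{comtet1974advanced}, and your double-counting of functions $f\colon[n]\to[m]$ stratified by image size (with the polynomial-interpolation step from positive integers to the indeterminate $x$) is exactly the standard argument found in those references. There is nothing to correct, and your suggestion to simply cite and relegate the sketch to a remark matches the paper's own treatment.
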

\begin{cor}
    We have
    \begin{align}\label{variant_formula}
        \sum_{k=1}^n S(n,k)(-1)^{k-1}(k-1)! = 0.
    \end{align}
\end{cor}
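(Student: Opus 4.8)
The plan is to deduce (\ref{variant_formula}) from Proposition \ref{property} by pulling out the common factor $x$ from (\ref{factorial_formula}) and then evaluating at $x=0$.

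First I would note that $x$ divides every falling factorial $[x]_{(k)}$ with $k\geq 1$: indeed $[x]_{(k)} = x\cdot(x-1)(x-2)\cdots(x-k+1)$, where for $k=1$ the trailing product is empty, hence equal to $1$. Likewise $x^{n} = x\cdot x^{n-1}$. Thus (\ref{factorial_formula}) is the equality of the two polynomials $x\cdot\bigl(\sum_{k=1}^{n}S(n,k)\,(x-1)(x-2)\cdots(x-k+1)\bigr)$ and $x\cdot x^{n-1}$ in $\F[x]$. Cancelling the factor $x$ — legitimate since $\F[x]$ is an integral domain — gives the polynomial identity
\begin{align*}
    \sum_{k=1}^{n}S(n,k)\,(x-1)(x-2)\cdots(x-k+1) = x^{n-1},
\end{align*}
where for $k=1$ the product on the left is again the empty product $1$.

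Next I would substitute $x=0$. The $k$-th summand on the left becomes $S(n,k)\cdot(-1)(-2)\cdots(-(k-1)) = (-1)^{k-1}(k-1)!\,S(n,k)$, so the left-hand side equals the sum in (\ref{variant_formula}); the right-hand side is $0^{n-1}$, which vanishes as soon as $n\geq 2$. This is exactly (\ref{variant_formula}). (For $n=1$ the right-hand side equals $1$, consistent with $S(1,1)\cdot 0!=1$, so the statement is really the $n\geq 2$ case, which is all that is needed since in the applications $n=2r$ with $r\in\N^+$.)

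I do not expect any genuine obstacle here: the entire content is that $x$ divides $[x]_{(k)}$ for every $k\geq 1$. The only points worth a sentence are that the cancellation of $x$ is valid at the level of polynomials (not merely for $x\neq 0$), and the harmless $n=1$ boundary case. An equivalent route leading to the same computation is to differentiate (\ref{factorial_formula}) once and evaluate at $x=0$: in $\frac{d}{dx}[x]_{(k)}$ only the summand in which the leading factor $x$ is differentiated survives at $0$, and it equals $(-1)^{k-1}(k-1)!$, while $\frac{d}{dx}x^{n}$ vanishes at $0$ for $n\geq 2$ — the same identity.
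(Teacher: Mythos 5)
Your proof is correct and follows essentially the same route as the paper, which simply says to divide (\ref{factorial_formula}) by $x$ and set $x=0$; you just spell out why the cancellation is valid as a polynomial identity. Your observation that the identity really requires $n\geq 2$ (it fails for $n=1$) is a fair point the paper leaves implicit, and is harmless since only $n\geq 2$ is used.
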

\begin{proof}
    Divide both sides of (\ref{factorial_formula}) by $x$, and then set the indeterminate $x$ to be $0$.
\end{proof}

Our main result in this section is:
\begin{thm}\label{general_coefficient}
    For $n\in\N^+$, let $\PP\in\FF$ with $\abs{\PP}=m$, $m\in[n]$. Suppose $((\PP)) = (n_i)_{i\in[m]}$, then $ \DD(\PP) = (-1)^{n-m}\prod_{i=1}^m(n_i-1)!$.
\end{thm}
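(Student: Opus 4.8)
The plan is to reduce the computation of $\DD(\PP)$ to a product over the blocks of $\PP$, and then to identify the per-block contribution with $(-1)^{\abs{P}-1}(\abs{P}-1)!$. The first observation is that a chain in $\GG(\PP_0,\PP)$ refines within each block of $\PP$ independently: if $\PP_0 = \QQ_0 < \QQ_1 < \cdots < \QQ_s = \PP$ is a chain, then restricting each $\QQ_t$ to a fixed block $P \in \PP$ gives a (weakly increasing) sequence of partitions of $P$, and conversely a chain of $\GG(\PP_0,\PP)$ is determined by choosing, for each block $P$, a chain from the discrete partition of $P$ up to the one-block partition of $P$, then interleaving these. Because the signed count $\DD$ is a signed sum over chain lengths and interleaving multiplies the lengths in a way that respects the alternating sign (the sign of a chain of total length is the product of the signs of its ``coordinates'' once one is careful about how the trivial partitions at the bottom are shared), one gets the multiplicativity $\DD(\PP_0,\PP) = \prod_{P \in \PP} \DD(\PP_0^{(P)}, \{P\})$, where $\PP_0^{(P)}$ is the discrete partition of the block $P$. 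By Observation \ref{symmetry} this only depends on $\abs{P}$, so it suffices to prove $\DD(\PP_0, \{[k]\}) = (-1)^{k-1}(k-1)!$ for each $k \in \N^+$; taking the product over blocks and collecting signs gives exactly $(-1)^{n-m}\prod_{i=1}^m (n_i-1)!$ since $\sum_i(n_i-1) = n-m$.

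For the single-block quantity, write $D_k \defeq \DD(\PP_0, \{[k]\})$, the signed count (odd chains minus even chains) of chains in the partition lattice of $[k]$ that start at the discrete partition and end at the full partition. I would set up a recursion by conditioning on the \emph{second} element $\QQ_1$ of such a chain (the first nontrivial partition), or more cleanly by a Möbius-function argument: the alternating sum over chain lengths in an interval of a poset is, up to sign conventions, the Möbius function of that interval. Concretely, for a finite poset, $\sum_{\text{chains } x = y_0 < y_1 < \cdots < y_s = z}(-1)^s = -\mu(x,z)$ when $x \ne z$ (and the empty-sum convention handles $x = z$); matching this against the definition of $\DD$ in terms of $\abs{\CC} = s+1$ gives $D_k = \mu(\PP_0, \{[k]\})$ up to an explicit sign. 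The Möbius function of the partition lattice is classically $\mu(\hat 0, \hat 1) = (-1)^{k-1}(k-1)!$ on $\Pi_k$, which is precisely the claim.

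Alternatively, to keep the paper self-contained and in the spirit of the falling-factorial identity already recorded, I would prove $D_k = (-1)^{k-1}(k-1)!$ directly by induction using Corollary (\ref{variant_formula}). The induction unfolds the chain from the top: a chain ending at $\{[k]\}$ of length $\geq 2$ is a chain of length one shorter ending at some $\QQ$ with $\PP_0 \leq \QQ < \{[k]\}$, and such $\QQ$ are in bijection with partitions of $[k]$ into $j$ blocks for $2 \leq j \leq k$, there being $S(k,j)$ of them, each contributing (by the induction hypothesis applied blockwise, i.e. by the multiplicativity above) a signed count of $\prod(\text{block sizes}-1)!$ with sign $(-1)^{k-j}$; summing the one-step extension relation $D_k = -\sum_{j=2}^{k}\big(\text{signed count of chains up to level }j\big)$ and reorganizing by $j$ produces exactly the left-hand side of (\ref{variant_formula}) shifted by the $j=1$ term, forcing $D_k = (-1)^{k-1}(k-1)!$.

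The main obstacle I anticipate is getting the \emph{multiplicativity} step and its sign bookkeeping exactly right: the discrete partitions at the bottom of each block's chain are not genuinely independent (they must all ``start simultaneously''), so the naive product of per-block chains overcounts or misaligns lengths, and one must argue that the interleaving correspondence is parity-faithful — this is where I would be most careful, probably by phrasing everything through the interval $[\PP_0, \PP]$ of the partition lattice being isomorphic to the product $\prod_{P \in \PP} \Pi_{\abs{P}}$ and invoking that $\DD$, suitably normalized, is multiplicative over products of posets (again just the multiplicativity of the Möbius function). Once that is in place, the reduction to $D_k$ and the evaluation of $D_k$ are routine given Proposition \ref{property} and its corollary.
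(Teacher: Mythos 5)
Your proposal is correct in substance, but it takes a genuinely different route from the paper. You identify $\DD(\PP_0,\PP)$ with the M\"obius function of the interval $[\PP_0,\PP]$ in the partition lattice via Philip Hall's chain-counting theorem, use the isomorphism $[\PP_0,\PP]\cong\prod_{P\in\PP}\Pi_{\abs{P}}$ together with multiplicativity of $\mu$ over poset products to reduce to a single block, and then quote the classical value $\mu(\hat 0,\hat 1)=(-1)^{k-1}(k-1)!$ in $\Pi_k$. The paper instead runs one self-contained induction on $n$: it classifies chains by their first step above the bottom, counts those first steps by products of Stirling numbers of the second kind $S(n_1,k_1)\cdots S(n_m,k_m)$ (with a sign flip from the parity change), and closes the induction with the identity (\ref{variant_formula}) derived from the falling-factorial formula. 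Your route is shorter if one imports the standard facts about $\mu$ (whose evaluation on $\Pi_k$ is itself usually proved by an induction of comparable weight), while the paper's stays elementary and needs only the one recorded identity. You were also right to distrust the naive ``interleaving'' justification of multiplicativity: the sign of a chain in a product interval is \emph{not} the product of the signs of its coordinate projections chain by chain (the identity holds only in aggregate), so routing it through M\"obius multiplicativity, as you ultimately propose, is the correct fix.

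Two smaller points. With the paper's convention that $\abs{\CC}$ counts the elements of a chain, a chain with $s+1$ elements contributes $(-1)^{s}$, so Hall's theorem gives $\DD(\PP_1,\PP_2)=+\mu(\PP_1,\PP_2)$, not $-\mu$; your final value for $D_k$ is nonetheless the right one. And in your self-contained alternative, which unfolds chains from the top, grouping the penultimate partitions $\QQ$ by their number of blocks $j$ yields $\sum_{\QQ:\abs{\QQ}=j}\prod_{B\in\QQ}(\abs{B}-1)!=c(k,j)$, the unsigned Stirling number of the \emph{first} kind, so the identity you need is $\sum_{j\geq 1}(-1)^{k-j}c(k,j)=[1]_{(k)}=0$ for $k\geq 2$, not (\ref{variant_formula}); the second-kind identity (\ref{variant_formula}) is what the paper's bottom-up unfolding produces, since there the blocks of the current partition are the objects being merged. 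This slip is repairable (the first-kind identity is equally classical), but as written that step of the alternative does not go through.
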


\begin{proof}[Proof  of Theorem \ref{general_coefficient}]
    When $n=1, 2$, things are trivial. For general $n$, we will use induction. Since we will encounter multiple tuples $(n_i)_{i\in[m]}$, it would be convenient to emphasize the dependence on $(n_i)_{i\in[m]}$: For each $((\PP))=(n_i)_{i\in[m]}$, let $\DD_{(n_i)_{i\in[m]}} \defeq \DD(\PP)$.

    Suppose our result holds for all $k<n$ for some $n\in\N^+$. By classification according to the first step, we have the following recursive identity:
    \begin{align}\label{recursion2}
        \DD_{(n_i)_{i\in[m]}} = -\sum_{\substack{1\leq k_i\leq n_i,\forall\, i\in[m]\\(k_i)_{i\in[m]}\neq(n_i)_{i\in[m]}}} S(n_1,k_1)\cdots S(n_m,k_m) \DD_{(k_i)_{i\in[m]}}
    \end{align}
    The combinatorial interpretation is that, for each $1\leq k_i\leq n_i$ ($i\in[m]$), we have $S(n_i,k_i)$ ways of grouping $n_i$ objects into $k_i$ objects, and different $i$'s are independent with each other, so the total number of ways is $S(n_1,k_1)\cdots S(n_m,k_m)$ by multiplication principle. After this the roles of odd chains and even chains are reversed, and the remaining combinatorial features only depend on $(k_i)_{i\in[m]}$.

    Note that $\sum_{i\in[m]}n_i = n$, $\sum_{i\in[m]}k_i < n$, so by our induction hypothesis on $\DD_{(k_i)_{i\in[m]}}$ ($1\leq k_i\leq n_i,\forall\, i\in[m]$ with $(k_i)_{i\in[m]}\neq(n_i)_{i\in[m]}$), (\ref{recursion2}) becomes
    \begin{align}\label{plug_in2}
        \DD_{(n_i)_{i\in[m]}} = &-\sum_{\substack{1\leq k_i\leq n_i,\forall\, i\in[m]\\(k_i)_{i\in[m]}\neq(n_i)_{i\in[m]}}} S(n_1,k_1)\cdots S(n_m,k_m) (-1)^{k_1+\cdots+k_m - m}\prod_{i=1}^m(k_i-1)!\nonumber\\
        = &-\sum_{k_1=1}^{n_1}\cdots\sum_{k_m=1}^{n_m} S(n_1,k_1)\cdots S(n_m,k_m) (-1)^{k_1+\cdots+k_m - m}\prod_{i=1}^m(k_i-1)!\nonumber\\ 
        &\quad\quad + 
        S(n_1,n_1)\cdots S(n_m,n_m) (-1)^{n_1+\cdots+n_m - m}\prod_{i=1}^m(n_i-1)!\nonumber\\
        =& -(-1)^m\prod_{i=1}^m\left[\sum_{k_i=1}^{n_i} S(n_i,k_i)(-1)^{k_i}(k_i-1)!\right]\nonumber\\
        &\quad\quad + S(n_1,n_1)\cdots S(n_m,n_m) (-1)^{n - m}\prod_{i=1}^m(n_i-1)!.
    \end{align}

    Plugging (\ref{variant_formula}) and the fact that $S(n_i,n_i)=1$ ($i\in[m]$) in (\ref{plug_in2}) yields
    \begin{align*}
        \DD_{(n_i)_{i\in[m]}} = (-1)^{n - m}\prod_{i=1}^m(n_i-1)!
    \end{align*}
    which finishes our inductive arguments and completes our proof.
\end{proof}

By Theorem \ref{general_coefficient}, we can easily recover \cite[Theorem 4]{2023arXiv231201717Z} by noting that
\begin{enumerate}
    \item $\DD(\PP_0) = (-1)^{n-n}\prod_{i=1}^n(1-1)! = 1$;
    \item  $\DD(\PP) = (-1)^{n-\frac{n}{2}}\prod_{i=1}^{\frac{n}{2}}(2-1)! = (-1)^{\frac{n}{2}}$ when $\#_2^\PP = \frac{n}{2}$.
\end{enumerate}

Finally, for applications (Section \ref{subsec:variants}), we are especially interested in the following case:
\begin{defn}[Good partition]\label{good_part}
    We say $\PP\in\FF$ is a ``good partition'', if $\abs{P}=1$ or $\abs{P}=2$ for all $P\in\PP$.
\end{defn}

\begin{prop}\label{useful}
    In Theorem \ref{main result}, suppose $\PP_1\leq \PP$ in $(\FF,<)$ are both good partitions. If $\#_2^\PP - \#_2^{\PP_1}= m$, then $\DD(\PP_1,\PP) = (-1)^m$.
\end{prop}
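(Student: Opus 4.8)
The plan is to reduce Proposition~\ref{useful} to the structure of the interval $[\PP_1,\PP]$ in the partition lattice and to the already-proven formula $\DD(\PP) = (-1)^{n-m}\prod_i (n_i-1)!$ from Theorem~\ref{general_coefficient}. The key observation is that, since $\PP_1\leq\PP$ are both \emph{good} partitions, passing from $\PP_1$ to $\PP$ amounts to choosing $m = \#_2^\PP - \#_2^{\PP_1}$ singleton blocks of $\PP_1$ and pairing them up into $2$-blocks of $\PP$; no block of $\PP_1$ of size $2$ can merge with anything (that would create a block of size $\geq 3$, contradicting that $\PP$ is good). Moreover, a chain $\CC\in\GG(\PP_1,\PP)$ must, at every stage, be a good partition as well (each intermediate partition lies between $\PP_1$ and $\PP$, hence has blocks of size $\leq 2$), so every step in such a chain merges a disjoint pair of singletons. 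Thus $\GG(\PP_1,\PP)$ is in bijection with chains of subsets in the Boolean-type order on the set of $m$ ``new'' pairs — equivalently, $\GG(\PP_1,\PP)$ is isomorphic, as a poset of chains, to $\GG(\PP_0', \PP')$ where $\PP_0'=\{\{1\},\dots,\{m\}\}$ and $\PP'$ is the partition of $[m]$ into a single... no: more precisely, it is isomorphic to $\GG(\widetilde\PP_0,\widetilde\PP)$ where $\widetilde\PP_0$ consists of $2m$ singletons and $\widetilde\PP$ consists of $m$ disjoint pairs. Hence $\DD(\PP_1,\PP) = \DD(\widetilde\PP_0,\widetilde\PP) = \DD(\widetilde\PP)$, and by item (2) following Theorem~\ref{general_coefficient} (or directly by Theorem~\ref{general_coefficient} with $n=2m$, $|\PP|=m$, all $n_i=2$) this equals $(-1)^{2m-m}\prod_{i=1}^m (2-1)! = (-1)^m$.

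First I would make precise the bijection between the blocks: write $S = \{P\in\PP : |P|=2,\ P\notin\PP_1\}$ and note $|S| = m$, and that each $P\in S$ is the union of exactly two distinct singletons of $\PP_1$; let $T$ be the set of those $2m$ singletons. The map $\CC \mapsto \CC|_{T}$ (restrict each partition in the chain to the ground set $\bigcup T$, forgetting the blocks untouched between $\PP_1$ and $\PP$) is a bijection from $\GG(\PP_1,\PP)$ onto $\GG(\widetilde\PP_0,\widetilde\PP)$ that preserves cardinality of chains, hence parity. The only things to check are that this map is well-defined (intermediate partitions are good, so they never mix $T$-singletons with blocks outside $T$, and never form triples), surjective (any chain on $T$ lifts back by appending the fixed outside blocks), and injective (the outside part is determined). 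Second I would invoke Observation~\ref{symmetry} / Theorem~\ref{general_coefficient} to evaluate $\DD(\widetilde\PP)$, which is exactly the $\#_2 = (2m)/2$ case recorded right after Theorem~\ref{general_coefficient}, giving $(-1)^m$.

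The main obstacle, such as it is, is purely a bookkeeping one: verifying that every partition appearing in a chain of $\GG(\PP_1,\PP)$ is forced to be good, and that the restriction map genuinely respects the chain structure and parity. This follows from Definition~\ref{partial order}: if $\PP_1 \leq \QQ \leq \PP$ then each block of $\QQ$ is a union of blocks of $\PP_1$ contained in a single block of $\PP$, and since blocks of $\PP$ have size $\leq 2$, blocks of $\QQ$ have size $\leq 2$ too — so $\QQ$ is good, and its size-$2$ blocks are precisely a subset of $S$. Once this is in hand the bijection is immediate and the parity (hence $\OO - \EE$) is preserved verbatim, so $\DD(\PP_1,\PP) = \DD(\widetilde\PP) = (-1)^m$. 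Alternatively — and perhaps more slickly — one can bypass the bijection entirely and compute $\DD(\PP_1,\PP)$ by the same ``classification by the first step'' recursion used in the proof of Theorem~\ref{general_coefficient}, restricted to good partitions: the recursion then only ever sees products of the two-element Stirling data $S(2,1)=S(2,2)=1$, and an immediate induction on $m$ gives $\DD(\PP_1,\PP) = -\sum_{0\leq k<m}\binom{m}{k}(-1)^k \cdot(\text{number of ways})^{-1}\cdots = (-1)^m$; I would present whichever of the two turns out shorter, but I expect the bijective argument above to be the cleanest to write.
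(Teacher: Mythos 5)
Your proposal is correct and follows essentially the same route as the paper: reduce to the case where $\PP_1$ consists of singletons, discard the blocks fixed along every chain in $\GG(\PP_1,\PP)$, and then invoke the $\#_2^\PP=\frac{n}{2}$ case of Theorem \ref{general_coefficient} to get $(-1)^m$; you merely spell out the restriction bijection that the paper compresses into a ``without loss of generality.''
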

\begin{proof}
    We argue by directly exploiting the combinatorial meaning of $\DD(\PP_1,\PP)$.
    
    Since $\PP_1\leq \PP$ in $(\FF,<)$ are both good partitions, we can only unite pairs of singletons in $\PP_1$ to get $\PP$, leaving those $\{P\in\PP_1: \abs{P}=2\}$ fixed in all $\CC\in\GG(\PP_1,\PP)$. Thus without loss of generality, we can assume that $\abs{P}=1$ for all $P\in\PP_1$, i.e., $\PP_1 = \PP_0$. 
    
    Moreover, we can also ignore those singletons $P\in\PP_0$ fixed in all $\CC\in\GG(\PP_0,\PP)$, so we may further assume that $n$ is even and $\abs{P}=2$, $\forall\,P\in\PP$. 
    
    Now $m=\#_2^\PP=\frac{n}{2}$ and so $\DD(\PP) = (-1)^{\frac{n}{2}} = (-1)^m$.
\end{proof}

\section{Applications}\label{sec:application}
\subsection{Formal constants in direct inequalities}\label{subsec:constant}\phantom{x}

Corollary \ref{identity_multilinear} and Theorem \ref{general_coefficient} enable us to obtain sharp bounds of the formal constants in direct inequalities. 

Along the way we will need a classical tool from combinatorics and number theory:
\begin{defn}[The partition function]
    For $n\in\N^+$, let $p(n)$ denote the total number of ways to write $n$ as a sum of positive integers.
\end{defn}

Recall our definition of ``types'' (Definition \ref{type_partition}), we know the number of all possible types in $\FF$ is exactly $p(n)$.

The partition function $p(n)$ enjoys lots of interesting properties, for which one can refer to Chapter II of \cite{comtet1974advanced}. However, for our purpose, we only record an upper bound for $p(n)$ here:
\begin{prop}[\cite{de2009simple} A simple upper bound for $p(n)$]\label{part_upper}
    For all $n\in\N^+$, we have
    \begin{align}
        p(n) < \frac{e^{c\sqrt{n}}}{n^{\frac{3}{4}}}
    \end{align}
    where $c = 2\sqrt{\zeta(2)} = \pi\sqrt{\frac{2}{3}} = 2.56509966\dots$ with $\zeta$ be the Riemann zeta function.
\end{prop}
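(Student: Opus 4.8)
The plan is to derive this from Euler's generating function $G(t)\defeq\prod_{k\ge 1}(1-t^k)^{-1}=\sum_{m\ge 0}p(m)t^m$ (with $p(0)\defeq 1$) by combining two elementary estimates, one exploiting monotonicity of $p$ and one a convexity trick. First, since the map adding a part equal to $1$ injects partitions of $m$ into partitions of $m+1$, the sequence $\bigl(p(m)\bigr)_{m\ge 0}$ is non-decreasing; hence for every $t\in(0,1)$,
\[
G(t)\ \ge\ \sum_{m\ge n}p(m)t^m\ \ge\ p(n)\sum_{m\ge n}t^m\ =\ \frac{p(n)\,t^n}{1-t},
\]
so $p(n)\le (1-t)\,t^{-n}G(t)$. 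The point of this tail estimate is the extra factor $1-t$, which at the eventual optimal $t$ has size $\asymp n^{-1/2}$ and accounts for half of the $n^{-3/4}$ saving over the naive bound $p(n)\le t^{-n}G(t)$.

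Second, I would prove the sharpened estimate, for all $s>0$,
\[
\log G(e^{-s})\ \le\ \frac{\zeta(2)}{s}+\tfrac12\log s-\tfrac12(1+\log 2).
\]
Writing $\log G(e^{-s})=\sum_{k\ge 1}\bigl(-\log(1-e^{-ks})\bigr)$ and noting that $x\mapsto -\log(1-e^{-xs})$ is positive and convex on $(0,\infty)$, the Hermite--Hadamard (midpoint) inequality gives $-\log(1-e^{-ks})\le\int_{k-1/2}^{k+1/2}-\log(1-e^{-xs})\,dx$ for each $k\ge 1$; summing over $k$ and substituting $y=xs$,
\[
\log G(e^{-s})\ \le\ \int_{1/2}^{\infty}-\log(1-e^{-xs})\,dx\ =\ \frac1s\Bigl(\zeta(2)-\int_0^{s/2}-\log(1-e^{-y})\,dy\Bigr),
\]
where $\int_0^\infty-\log(1-e^{-y})\,dy=\sum_{k\ge 1}k^{-2}=\zeta(2)$. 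The displayed bound then follows from $1-e^{-y}\le y$, which yields $\int_0^{s/2}-\log(1-e^{-y})\,dy\ \ge\ \int_0^{s/2}-\log y\,dy=\tfrac{s}{2}\bigl(1-\log\tfrac{s}{2}\bigr)$.

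With these in hand I would put $t=e^{-s}$, use $1-e^{-s}<s$, and exponentiate the second estimate to get, for all $s>0$,
\[
p(n)\ <\ \frac{s^{3/2}}{\sqrt{2e}}\,\exp\!\Bigl(ns+\frac{\zeta(2)}{s}\Bigr).
\]
The exponent $ns+\zeta(2)/s$ is minimized at $s_0=\sqrt{\zeta(2)/n}$, where it equals $2\sqrt{n\,\zeta(2)}=c\sqrt n$ (since $c=2\sqrt{\zeta(2)}$), and $s_0^{3/2}=\zeta(2)^{3/4}n^{-3/4}$; hence
\[
p(n)\ <\ \frac{\zeta(2)^{3/4}}{\sqrt{2e}}\cdot\frac{e^{c\sqrt n}}{n^{3/4}}\ =\ 0.6229\ldots\cdot\frac{e^{c\sqrt n}}{n^{3/4}}\ <\ \frac{e^{c\sqrt n}}{n^{3/4}},
\]
which is the asserted inequality, uniformly in $n\in\N^+$ and with room to spare.

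The only step requiring care is the second estimate. The naive term-by-term bound $-\log(1-e^{-ks})\le e^{-ks}/(1-e^{-ks})\le 1/(ks)$ merely gives $\log G(e^{-s})\le\zeta(2)/s$, i.e.\ the classical $G(t)\le\exp\bigl(\zeta(2)\tfrac{t}{1-t}\bigr)$, which is off by precisely the $n^{1/4}$ factor one must remove; the negative term $\tfrac12\log s$ (equivalently the extra $(1-t)^{1/2}$ against the classical bound) is invisible at the level of individual terms, and it is exactly this gain that the Hermite--Hadamard comparison extracts in an elementary way. I would also double-check that the whole chain of integral inequalities is valid for \emph{all} $s>0$ --- in particular the crude comparison $-\log(1-e^{-y})\ge-\log y$ holds for every $y>0$, not just small $y$ --- so that the final bound is genuinely uniform over $n\ge1$ rather than only asymptotic, and that the resulting absolute constant $\zeta(2)^{3/4}/\sqrt{2e}$ is indeed below $1$, which is a one-line numerical check.
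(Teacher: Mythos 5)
Your proposal is correct, and it supplies something the paper deliberately omits: Proposition \ref{part_upper} is recorded there without proof, with a citation to \cite{de2009simple}, so there is no in-paper argument to compare against. Checking your steps: the injection ``add a part equal to $1$'' does make $p$ non-decreasing, giving $p(n)\le (1-t)t^{-n}G(t)$ for $t\in(0,1)$; the map $x\mapsto -\log(1-e^{-xs})$ has second derivative $s^{2}e^{xs}(e^{xs}-1)^{-2}>0$, so the midpoint Hermite--Hadamard comparison legitimately yields $\log G(e^{-s})\le \frac{1}{s}\int_{s/2}^{\infty}-\log(1-e^{-y})\,dy$; the full integral equals $\zeta(2)$ by expanding $-\log(1-e^{-y})=\sum_{k\ge1}e^{-ky}/k$; and the pointwise bound $-\log(1-e^{-y})\ge-\log y$ holds for every $y>0$, so $\log G(e^{-s})\le \frac{\zeta(2)}{s}+\frac{1}{2}\log s-\frac{1}{2}(1+\log 2)$ is valid uniformly in $s>0$, not just asymptotically. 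Taking $s=\sqrt{\zeta(2)/n}$ then gives $p(n)<\zeta(2)^{3/4}(2e)^{-1/2}\,e^{c\sqrt{n}}n^{-3/4}$ with $\zeta(2)^{3/4}(2e)^{-1/2}\approx 0.623<1$, which is the stated inequality (in fact slightly stronger), uniformly for $n\ge1$. Your accounting of where the $n^{-3/4}$ comes from is also the right way to see the mechanism: the factor $1-t\asymp n^{-1/2}$ from the monotone-tail trick, plus the extra $\sqrt{s}\asymp n^{-1/4}$ gained by the integral comparison over the crude bound $\log G(e^{-s})\le\zeta(2)/s$, followed by optimization at $s\asymp n^{-1/2}$. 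This is an elementary, self-contained derivation in the same generating-function spirit as the cited reference, so it could serve as a proof of the proposition within the paper if one wished to make it self-contained.
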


\begin{thm}\label{effective direct}
    Fix $r\in\N^+$ and a $\sigma$-finite measure space $(X,d\mu)$. Assume that $\{f_l\}_{1\leq l\leq L}$ is a finite family of complex-valued functions in $L^{2r}(X,d\mu)$, which satisfies Type IV superorthogonality:
    \begin{align}\label{Type_VI}
        \int_X f_{l_1}\overbar{f_{l_2}} \cdots f_{l_{2r-1}}\overbar{f_{l_{2r}}} d\mu = 0 \,\,\, \text{whenever $l_1,\dots, l_{2r}$ are all distinct}.
    \end{align}
    Then we have
    \begin{align*}
        \norm{\sum_{l=1}^L  f_l}_{2r} \leq C_r \norm{\left(\sum_{l=1}^L\abs{f_l}^2\right)^{\frac{1}{2}}}_{2r}.
    \end{align*}
    In particular, we may take $C_r < 4r$. The order 
    $O(r)$ is optimal.
\end{thm}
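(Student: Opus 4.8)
The plan is to turn the direct inequality into a single one–variable polynomial inequality for $t:=\|F\|_{2r}\big/\|G\|_{2r}$, where $F:=\sum_{l=1}^{L}f_l$ and $G:=\bigl(\sum_{l=1}^{L}|f_l|^{2}\bigr)^{1/2}$ (we may assume $\|G\|_{2r}>0$, since otherwise all $f_l$ vanish), and then to read off the constant $4r$ from an elementary estimate; the optimality of the order $O(r)$ will be handled separately by a matching example. First I would expand $\|F\|_{2r}^{2r}=\int_X(F\overline F)^{r}\,d\mu=\sum_{l_1,\dots,l_{2r}}\int_X f_{l_1}\overline{f_{l_2}}\cdots f_{l_{2r-1}}\overline{f_{l_{2r}}}\,d\mu$ and apply Corollary \ref{identity_multilinear} with $n=2r$, $\PP_1=\PP_0$, the finite–dimensional spaces $V_j$ equal to the linear span of $\{f_l\}_l$ for odd $j$ and of $\{\overline{f_l}\}_l$ for even $j$, and the $2r$–linear functional $\Lambda(w_1,\dots,w_{2r})=\int_X w_1\cdots w_{2r}\,d\mu$. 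The distinct sum on the left is precisely $\sum^{*}_{l_1,\dots,l_{2r}}\int_X f_{l_1}\overline{f_{l_2}}\cdots f_{l_{2r-1}}\overline{f_{l_{2r}}}\,d\mu$, which vanishes by Type IV superorthogonality; isolating the term $\PP=\PP_0$ (for which $\DD(\PP_0)=1$ by Theorem \ref{general_coefficient} and whose integral equals $\|F\|_{2r}^{2r}$) gives
\[
\|F\|_{2r}^{2r}=-\sum_{\PP\neq\PP_0}\DD(\PP)\int_X\prod_{P\in\PP}\Bigl(\sum_{l=1}^{L}f_l^{\,a_P}\,\overline{f_l}^{\,b_P}\Bigr)\,d\mu ,
\]
where $a_P+b_P=|P|$ count the odd and even indices inside the block $P$.

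Next I would estimate the $\PP$–terms. A singleton block sums to $\sum_l f_l=F$ (or $\overline F$), contributing $|F|$ in absolute value, while a block $P$ with $|P|\ge2$ contributes $\bigl|\sum_l f_l^{\,a_P}\overline{f_l}^{\,b_P}\bigr|\le\sum_l|f_l|^{|P|}\le\bigl(\sum_l|f_l|^{2}\bigr)^{|P|/2}=G^{|P|}$, using $\ell^{2}\hookrightarrow\ell^{|P|}$; since the non–singleton blocks cover $2r-\#_1^{\PP}$ indices, the $\PP$–term is at most $\int_X|F|^{\#_1^{\PP}}G^{\,2r-\#_1^{\PP}}\,d\mu\le\|F\|_{2r}^{\#_1^{\PP}}\|G\|_{2r}^{\,2r-\#_1^{\PP}}$ by Hölder. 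Combining this with $|\DD(\PP)|=\prod_{P\in\PP}(|P|-1)!$ (Theorem \ref{general_coefficient}) and grouping the partitions $\PP\neq\PP_0$ by $j:=2r-\#_1^{\PP}\in\{2,\dots,2r\}$ — there are $\binom{2r}{j}$ choices of the $j$ indices in non–singleton blocks, and then $\sum\prod(|Q|-1)!$ over partitions of those indices into blocks of size $\ge2$ equals the derangement number $D_j$ (the cycle–type count of fixed–point–free permutations of a $j$–set) — I reach
\[
t^{2r}\le\sum_{j=2}^{2r}\binom{2r}{j}D_j\,t^{\,2r-j}.
\]
Using the binomial transform of the derangements, $\sum_{j=0}^{n}\binom nj D_j\,t^{\,n-j}=n!\sum_{i=0}^{n}\frac{(t-1)^{i}}{i!}$ (read off from $\frac{e^{-x}}{1-x}\cdot e^{tx}=\frac{e^{(t-1)x}}{1-x}$), and noting the $j=0$ term is $t^{2r}$ while the $j=1$ term vanishes since $D_1=0$, this collapses to
\[
2\,t^{2r}\le(2r)!\sum_{i=0}^{2r}\frac{(t-1)^{i}}{i!}.
\]

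To finish, suppose $t\ge4r$ and set $u:=t-1>2r$. From $[2r]_{(k)}\le(2r)^{k}$ one gets $(2r)!\sum_{i=0}^{2r}\frac{u^{i}}{i!}=u^{2r}\sum_{k=0}^{2r}\frac{[2r]_{(k)}}{u^{k}}<u^{2r}\cdot\frac{u}{u-2r}$, whereas Bernoulli gives $2t^{2r}=2(u+1)^{2r}\ge u^{2r}\bigl(2+\tfrac{4r}{u}\bigr)$, and the elementary inequality $2+\tfrac{4r}{u}>\tfrac{u}{u-2r}$ holds for all $u\ge 4r-1$ — contradicting the previous display. Hence $t<4r$, i.e.\ $C_r<4r$. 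For the optimality of the order $O(r)$ I would exhibit an explicit family satisfying even Type III superorthogonality with $\|F\|_{2r}\gtrsim r\,\|G\|_{2r}$ (for instance the extremal configuration underlying the reverse Littlewood--Paley inequality), the point being to make the low–order correlations — which Type IV and even Type III leave uncontrolled — add up constructively; this is carried out in Section \ref{subsec:applications}. One can also note that the polynomial bound above is itself of order $r$ (its positive root exceeds $r$, as one checks by evaluating at $t=r$), so the present argument cannot be improved.

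The routine parts are the set–up through Corollary \ref{identity_multilinear} together with Type IV, and the derangement bookkeeping. The genuinely delicate points are: (i) the final real–variable estimate, where bounding $\sum_{i=0}^{2r}(t-1)^{i}/i!$ by $e^{t-1}$ is too lossy and one must instead use the geometric–tail bound in order to land on the clean constant $4r$; and (ii) the lower–bound example, whose verification of the superorthogonality hypothesis is the subtle part of the optimality claim.
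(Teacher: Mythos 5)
Your bound $C_r<4r$ is correct, and the route you take through the coefficients is genuinely different from the paper's. Both arguments share the same skeleton: apply Corollary \ref{identity_multilinear} with $\PP_1=\PP_0$, use Type IV to kill the distinct sum, bound each $\PP$-term by $\norm{\sum_l f_l}_{2r}^{\#_1^\PP}\norm{(\sum_l\abs{f_l}^2)^{1/2}}_{2r}^{2r-\#_1^\PP}$ via the $\ell^2\hookrightarrow\ell^{\abs{P}}$ embedding and H\"older, and reduce to a one-variable polynomial inequality for $t$. The difference is in how the coefficients $C_\alpha=\sum_{\PP>\PP_0:\,\#_1^\PP=\alpha}\abs{\DD(\PP)}$ are handled. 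The paper only \emph{bounds} them: it groups partitions by type, invokes the partition-function estimate $p(n)<e^{c\sqrt n}/n^{3/4}$, asserts a numerical inequality for all $n-\alpha<60$ (stated without proof as (\ref{program_result})), and then closes with a Fibonacci-type recursion giving $s\le(1+\sqrt5)/2$ and $t\approx 3.9992\,r$. You instead compute the coefficients \emph{exactly}: choosing the $j=2r-\alpha$ non-singleton indices and summing $\prod_Q(\abs{Q}-1)!$ over partitions of them into blocks of size $\ge2$ is precisely the derangement number $D_j$ (cycle decomposition of fixed-point-free permutations), so $t^{2r}\le\sum_{j=2}^{2r}\binom{2r}{j}D_j\,t^{2r-j}$, which the exponential-generating-function identity collapses to $2t^{2r}\le(2r)!\sum_{i=0}^{2r}(t-1)^i/i!$. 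Your closing estimate is also sound: with $u=t-1\ge 4r-1>2\sqrt2\,r>2r$, the geometric bound gives $(2r)!\sum_i u^i/i!<u^{2r}\,u/(u-2r)$, Bernoulli gives $2(u+1)^{2r}\ge u^{2r}(2+4r/u)$, and $2+4r/u>u/(u-2r)$ is equivalent to $u^2>8r^2$, yielding the contradiction and hence $t<4r$. This bookkeeping is tighter than the paper's: it avoids both the partition-function input and the unproved numerical claim, while landing on the same constant.

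The one place your write-up falls short of the full statement is the optimality of the order $O(r)$. The paper derives it from the sharpness of the constant $p-1$ in the Burkholder--Gundy inequality: martingale differences satisfy Type III (hence Type IV) superorthogonality, and Burkholder's extremal martingales force $C_r\gtrsim r$. Your suggested substitute --- ``the extremal configuration underlying the reverse Littlewood--Paley inequality'' --- cannot do this job: for $p\ge2$ the best known lower bound for the reverse Littlewood--Paley constant is only of order $p^{1/2}$ (its optimal order is an open problem), so that configuration yields at most $C_r\gtrsim\sqrt r$. The matching example you need is the martingale one, not the Littlewood--Paley one; with that replacement the optimality claim goes through exactly as in the paper.
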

\begin{proof}
    Apply Corollary \ref{identity_multilinear} with $n=2r$, $V_j=\F=\C$, $v_{j,l} = f_{l}$ ($\forall\,j\in[2r], 1\leq l\leq L$), and $\Lambda: (v_1,\cdots,v_{2r})\mapsto v_1\overbar{v_2}\cdots v_{2r-1}\overbar{v_{2r}}$. Then we have
    \begin{align}\label{apply}
        \sum_{\substack{(l_j)_{j\in[2r]}\\l_j\text{'s all distinct}}} f_{l_1}\overbar{f_{l_2}}\cdots f_{l_{2r-1}}\overbar{f_{l_{2r}}} & = \sum_{\PP\in\FF}\DD(\PP)\sum_{(l_P)_{P\in\PP}} f_{l_{\PP(1)}}\overbar{f_{l_{\PP(2)}}}\cdots f_{l_{\PP(2r-1)}}\overbar{f_{l_{\PP(2r)}}}\nonumber\\
        & = \abs{\sum_{l=1}^L f_l}^{2r} + \sum_{\PP>\PP_0}\DD(\PP)\sum_{(l_P)_{P\in\PP}} f_{l_{\PP(1)}}\overbar{f_{l_{\PP(2)}}}\cdots f_{l_{\PP(2r-1)}}\overbar{f_{l_{\PP(2r)}}}.
    \end{align}
    Integrate both sides of (\ref{apply}) over $X$. The vanishing condition (\ref{Type_VI}) implies
    \begin{align*}
        \int_X \sum_{\substack{(l_j)_{j\in[2r]}\\l_j\text{'s all distinct}}}  f_{l_1}\overbar{f_{l_2}}\cdots f_{2r-1}\overbar{f_{2r}} d\mu = 0.
    \end{align*}
    Thus by the triangle inequality, we have
    \begin{align}\label{absolute_value}
        \int_X \abs{\sum_{l=1}^L f_l}^{2r} d\mu \leq \sum_{\PP>\PP_0}\abs{\DD(\PP)} \int_X\abs{\sum_{(l_P)_{P\in\PP}} f_{l_{\PP(1)}}\overbar{f_{l_{\PP(2)}}}\cdots f_{l_{\PP(2r-1)}}\overbar{f_{l_{\PP(2r)}}}} d\mu.
    \end{align}
    
    Then we have
    \begin{align*}
        \abs{\sum_{(l_P)_{P\in\PP}} f_{l_{\PP(1)}}\overbar{f_{l_{\PP(2)}}}\cdots f_{l_{\PP(2r-1)}}\overbar{f_{l_{\PP(2r)}}}}
        &\leq \prod_{\substack{P\in\PP\\\abs{P}= 1}} \abs{\sum_{l=1}^L f_l} \cdot \prod_{\substack{P\in\PP\\\abs{P}\geq 2}}\left( \sum_{l=1}^L\abs{f_l}^{\abs{P}} \right)\nonumber\\
        &\leq \prod_{\substack{P\in\PP\\\abs{P}= 1}} \abs{\sum_{l=1}^L f_l} \cdot \prod_{\substack{P\in\PP\\\abs{P}\geq 2}}\left( \sum_{l=1}^L\abs{f_l}^2 \right)^{\frac{\abs{P}}{2}}\nonumber\\
        &=
        \abs{\sum_{l=1}^L f_l}^{\#_1^\PP}\left(\sum_{l=1}^L\abs{f_l}^2\right)^{\frac{2r-\#_1^\PP}{2}}.
    \end{align*}
    So we can bound each integral on the right hand side of (\ref{absolute_value}) by Hölder's inequality to obtain
    \begin{align}\label{each_term}
        \int_X \abs{\sum_{l=1}^L f_l}^{2r} d\mu
        &\leq \sum_{\PP>\PP_0}\abs{\DD(\PP)}\int_X \abs{\sum_{l=1}^L f_l}^{\#_1^\PP}\left(\sum_{l=1}^L\abs{f_l}^2\right)^{\frac{2r-\#_1^\PP}{2}} d\mu\nonumber\\
        &\leq \sum_{\PP>\PP_0}\abs{\DD(\PP)} \left(\int_X \abs{\sum_{l=1}^L f_l}^{2r} d\mu\right)^{\frac{\#_1^\PP}{2r}}\left(\int_X \left(\sum_{l=1}^L\abs{f_l}^2\right)^r
        d\mu \right)^{\frac{2r-\#_1^\PP}{2r}}.
    \end{align}

    Now divide both sides of (\ref{each_term}) by $ \norm{\left(\sum_{l=1}^L\abs{f_l}^2\right)^\frac{1}{2}}_{2r}^{2r}$, and let
    \begin{align*}
        t\defeq \frac{\norm{\sum_{l=1}^L f_l}_{2r}}{\norm{\left(\sum_{l=1}^L\abs{f_l}^2\right)^\frac{1}{2}}_{2r}},
    \end{align*}
    we get
    \begin{align}\label{optimize}
        t^{2r}\leq Q(t)
    \end{align}
    for some polynomial $Q$ whose degree is $2r-2$ (as $\PP>\PP_0\Rightarrow \#_1^\PP\leq 2r-2$).

    For simplicity, from now on, we use $n$ to denote $2r$. Note that the coefficient of $t^\alpha$ ($0\leq\alpha\leq n-2$) in $Q(t)$ is
    \begin{align}\label{poly_coefficients}
        C_\alpha\defeq \sum_{ \PP>\PP_0: \#_1^\PP=\alpha}\abs{\DD(\PP)}.
    \end{align}
    
    Since $\DD(\PP)$ only depends on $((\PP))$, we can classify the terms in (\ref{poly_coefficients}) according to $((\PP))$ and apply 
    Theorem \ref{general_coefficient} to bound $C_\alpha$:
    \begin{align}
        C_\alpha &\leq \sum_{((\PP)): \#_1^\PP=\alpha} \frac{n!}{\alpha! n_1^\PP!\cdots n_m^\PP!}(n_1^\PP-1)!\cdots(n_m^\PP-1)! =\frac{n!}{\alpha!} \sum_{((\PP)): \#_1^\PP=\alpha} \frac{1}{n_1^\PP\cdots n_m^\PP}\nonumber
    \end{align}
    where we use $(n_i^\PP)_{i\in[m]}$ to represent the unordered tuple when all $1$'s are removed from $((\PP))$. Here the multinomial coefficients represents the number of ways to select a specific $\PP$ inside each $((\PP))$ (if $(n_i^\PP)_{i\in[m]}$ are not all distinct, the number is actually smaller). 
    
    For all $n$ and $\alpha$ satisfying $n-\alpha < 60$, one can directly compute that
    \begin{align}\label{program_result}
        \sum_{((\PP)): \#_1^\PP=\alpha} \frac{1}{n_1^\PP\cdots n_m^\PP} < 1.
    \end{align}
    Thus 
    \begin{align}\label{coeff_simplified_0}
        C_\alpha^{\frac{1}{n-\alpha}} < \left(\frac{n!}{\alpha!}\right)^{\frac{1}{n-\alpha}} \leq n,
    \end{align}
    where we have used the simple fact that $n!/\alpha!\leq n^{n-\alpha}$. (One can also use Stirling's formula to compute the factorials explicitly, but it turns out that such extra efforts helps little if we aim to get a unified bound on $C_\alpha^{\frac{1}{n-\alpha}}$.)

    For all $n$ and $\alpha$ satisfying $n-\alpha \geq 60$, by Proposition \ref{part_upper}, the total number of $((\PP))$ with $\#_1^\PP=\alpha$ can be bounded by $p(n-\alpha) < e^{c\sqrt{n-\alpha}}/(n-\alpha)^{3/4}$, so we have
    \begin{align}\label{coeff_bound}
        C_\alpha\leq \frac{e^{c\sqrt{n-\alpha}}}{(n-\alpha)^{\frac{7}{4}}}\frac{n!}{\alpha!}
    \end{align}
    in view of the simple fact that $n_1^\PP\cdots n_m^\PP \geq n-\alpha$ for all $((\PP))$ with $\#_1^\PP = \alpha$.
    
    Thus \begin{align}\label{coeff_simplified}
        C_\alpha^{\frac{1}{n-\alpha}} \leq \left( \frac{e^{c\sqrt{n-\alpha}}}{(n-\alpha)^{\frac{7}{4}}}\frac{n!}{\alpha!} \right)^{\frac{1}{n-\alpha}}\leq \frac{e^{\frac{c}{\sqrt{n-\alpha}}}}{(n-\alpha)^{\frac{1}{n-\alpha}\frac{7}{4}}} (n^{n-\alpha})^{\frac{1}{n-\alpha}}\leq \frac{e^{\frac{\pi}{3\sqrt{10}}}}{60^{\frac{7}{240}}}n,
    \end{align}
    where we used the fact that $e^{\frac{c}{\sqrt{n-\alpha}}}/(n-\alpha)^{\frac{1}{n-\alpha}\frac{7}{4}}$ increases monotonically for 
    $0 \leq \alpha \leq n-60$ and so achieves the maximum when $\alpha = n-60$. 
    
    Let $K\defeq e^{\frac{\pi}{3\sqrt{10}}} / 60^{\frac{7}{240}}$. Plugging (\ref{coeff_simplified_0}) and (\ref{coeff_simplified}) into (\ref{optimize}) gives
    \begin{align*}
        t^n\leq \sum_{\alpha=0}^{n-2} C_\alpha t^\alpha \leq \sum_{\alpha=0}^{n-2} (Kn)^{n-\alpha}t^\alpha.
    \end{align*}
    Divide both sides with $(Kn)^n$ and let $s=\frac{t}{Kn}$, this becomes
    \begin{align*}
        s^n\leq \sum_{\alpha=0}^{n-2}s^\alpha.
    \end{align*}
    Suppose $s^2>s+1$, then we will use induction to prove $s^n > \sum_{\alpha=0}^{n-2}s^\alpha$. The base cases can be easily verified (now we consider all $n\geq 2$). If it is true for all $n\leq n_0$, then for $n=n_0+1$ we have $s^{n_0+1} = s^{n_0-1}s^2 > s^{n_0-1}(s+1) = s^{n_0} + s^{n_0-1} > (s^{n_0-1} + s^{n_0-2}) + (\sum_{\alpha=0}^{n_0-3}s^\alpha) = \sum_{\alpha=0}^{n_0-1} s^\alpha$. So $s^n > \sum_{\alpha=0}^{n-2}s^\alpha$, a contradiction, which means we must have $s^2\leq s+1$, i.e., $s\leq \frac{1+\sqrt{5}}{2}$. Therefore, from $t=Ksn$ we get 
    \begin{align*}
        t \leq \frac{e^{\frac{\pi}{3\sqrt{10}}}}{60^{\frac{7}{240}}} \cdot \frac{1+\sqrt{5}}{2} \cdot 2r \approx 3.9992 r < 4r,
    \end{align*}
    which is what we want.

    The optimality of the order $O(r)$ can be deduced from the Burkholder-Gundy inequality (\ref{recover}) in martingale theory, which will be discussed in detail in the next subsection.
\end{proof}

If one simply uses (\ref{coeff_simplified}) to handle all $n$ and $\alpha$, then the order $O(r)$ is still available. Here we perform direct computation for $n-\alpha<60$ just to lower the absolute constant factor to $4$.

Note that the previous upper bound for $C_r$ is $2^{1/2}\left((2r)!-1\right)^{1/2}$ in \cite{gressman2023new}. It's also worth mentioning that the currently best
order of the formal constant under Type II superorthogonality is also $O(r)$, as can be seen from the proof in \cite[Section 3.1]{PierceOnSuperorthogonality}.

One can also try to apply similar arguments to get an upper bound of the formal constant in the converse inequality (Theorem \ref{main thm}). However, in this case something weird happens and there is no way to get a bound of order $O(r)$. Roughly speaking, the bound (\ref{coeff_bound}) can become $C_\alpha\lesssim n^n$ to cover the ``$\exists\abs{P}\geq3$''(joint) case in \cite{2023arXiv231201717Z}. Also, to cover the ``$\exists\abs{P}=1$''(single) case in \cite{2023arXiv231201717Z}, we need to deal with $t^n\leq \sum_{\alpha=1}^{n-2}(Kn)^{n-\alpha} t^{n-\alpha} = \sum_{\beta=2}^{n-1}(Kn)^\beta t^\beta$, which lacks homogeneity. So the final result would also be like $C_r\lesssim n^n$ ($n=2r$), which seems very bad. Such phenomenon somehow implies that bounding the formal constant of converse inequalities is more subtle than bounding that of direct inequalities.

\subsection{Applications in martingale theory and Littlewood-Paley theory}\label{subsec:applications}\phantom{x}

By a standard limiting argument as in \cite[Section 3]{gressman2023new}, Theorem \ref{effective direct} can be passed to the case when $L=\infty$. Then by vector-valued interpolation, one can easily see that the following corollary holds:
\begin{cor}\label{infinite_terms}
    Fix a $\sigma$-finite measure space $(X,d\mu)$. Assume that $\{f_l\}_{l\in\N^+}$ is a family of complex-valued functions in $L^{p}(X,d\mu)$ with $p\geq 2$, which satisfies Type IV superorthogonality for any $r\in\N^+$:
    \begin{align}\label{Type_VI}
        \int_X f_{l_1}\overbar{f_{l_2}} \cdots f_{l_{2r-1}}\overbar{f_{l_{2r}}} d\mu = 0 \,\,\, \text{whenever $l_1,\dots, l_{2r}$ are all distinct}.
    \end{align}
    Then we have
    \begin{align*}
        \norm{\sum_{l=1}^\infty  f_l}_{p} \leq C_p \norm{\left(\sum_{l=1}^\infty\abs{f_l}^2\right)^{\frac{1}{2}}}_{p}.
    \end{align*}
    In particular, if $2r$ is the smallest even number larger than $p$, then we may take $C_p < 4r \leq 2(p+2)$, i.e., $C_p = O(p)$. The order $O(p)$ is optimal.
\end{cor}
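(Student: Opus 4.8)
The plan is to deduce Corollary \ref{infinite_terms} from Theorem \ref{effective direct} in two stages: first pass to $L=\infty$, then interpolate down from even integers $2r$ to arbitrary real $p\ge 2$. For the first stage I would invoke the standard limiting argument referenced in \cite[Section 3]{gressman2023new}: given the infinite family $\{f_l\}_{l\in\N^+}$ satisfying Type IV superorthogonality for all $r$, each truncation $\{f_l\}_{1\le l\le L}$ trivially still satisfies Type IV superorthogonality, so Theorem \ref{effective direct} gives $\norm{\sum_{l=1}^L f_l}_{2r}\le 4r\,\norm{(\sum_{l=1}^L\abs{f_l}^2)^{1/2}}_{2r}$ with the constant \emph{independent of $L$}; letting $L\to\infty$ and using monotone convergence on the right-hand side (and, say, Fatou on the left, noting the partial sums converge in $L^{2r}$ when the right side is finite) yields the even-integer case with $C_{2r}<4r$.

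For the second stage, fix $p\ge 2$ and let $2r$ be the smallest even integer with $2r>p$, so that $2\le p<2r\le p+2$. I would set up a vector-valued (i.e.\ $\ell^2$-valued) interpolation: view the map $T:(f_l)_l\mapsto \sum_l f_l$ as acting from $L^q(X;\ell^2)$ to $L^q(X)$. At the endpoint $q=2$ we have the trivial bound $\norm{\sum_l f_l}_2\le \norm{(\sum_l\abs{f_l}^2)^{1/2}}_2$ (this is just orthogonality of the individual $L^2$ norms after expansion, or Bessel/Cauchy–Schwarz — actually here it is even an equality-type estimate with constant $1$, since Type IV with $r=1$ gives pairwise orthogonality). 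At the endpoint $q=2r$ we have the bound with constant $<4r$ just established. Then Riesz–Thorin / complex interpolation for vector-valued $L^q$ spaces gives, for the intermediate exponent $p$ with $\frac1p=\frac{1-\theta}{2}+\frac{\theta}{2r}$, the bound $C_p\le 1^{1-\theta}(4r)^{\theta}\le 4r$, hence $C_p<4r\le 2(p+2)=O(p)$.

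The optimality of the order $O(p)$ I would import directly: the statement of Theorem \ref{effective direct} already asserts the order $O(r)$ is optimal, justified there via the Burkholder–Gundy inequality \eqref{recover}; since $p\asymp r$ in our regime, the same example (a martingale difference sequence, which satisfies Type IV superorthogonality for all $r$ and for which the reverse inequality constant in $L^p$ is known to grow like $p$) shows $C_p\gtrsim p$, so $C_p=O(p)$ is sharp. I would simply cite the next subsection for this.

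The main obstacle is making the vector-valued interpolation fully rigorous: one must check that $\ell^2$-valued $L^q$ interpolation applies to the specific sublinear/linear operator $T$, that the $\ell^2$ norm on the target of the coefficient space is the correct one to interpolate (it is, since $\ell^2$ is fixed and only the outer $L^q$ exponent varies, so this is the classical Calderón/Benedek–Panzone vector-valued Riesz–Thorin), and that the $L=\infty$ passage is legitimate when the right-hand side is finite (otherwise there is nothing to prove). Both points are routine but deserve a sentence each; I would keep the write-up short by pointing to \cite[Section 3]{gressman2023new} for the limiting argument and to a standard reference for vector-valued interpolation, exactly as the excerpt's phrasing (``by a standard limiting argument'', ``by vector-valued interpolation'') already signals.
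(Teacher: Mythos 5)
Your proposal is correct and is essentially identical to the paper's own proof, which consists of exactly the two steps you describe: the standard limiting argument of \cite[Section 3]{gressman2023new} to pass Theorem \ref{effective direct} (with its $L$-independent constant $<4r$) to $L=\infty$, followed by vector-valued ($\ell^2$-valued) interpolation between the exponents $2$ and $2r$, with optimality imported from the Burkholder--Gundy inequality (\ref{recover}) as discussed in the same subsection. The only caveat -- that the $L^{2r}$ bound holds only for families satisfying the superorthogonality hypothesis, so the interpolation should be run on the span of families $(a_l f_l)_l$ (which inherit Type IV) rather than on all of $L^q(X;\ell^2)$ -- is glossed over by the paper in exactly the same way, so your write-up is, if anything, more explicit than the original.
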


Now we consider the square function in the martingale setting. Fix a probability space $(\Omega, \mathscr{A}, \mu)$ and a sequence of $\sigma$-algebras $\mathscr{A}_1 \subseteq \mathscr{A}_2 \subseteq \cdots \subseteq \mathscr{A}_n \subseteq \cdots \subseteq \mathscr{A}$ such that $\cup_{n=1}^\infty \mathscr{A}_n = \mathscr{A}$. For a random variable $f\in L^1(\Omega, \mathscr{A}, \mu)$, we will set $E_n(f) = E(f|\mathscr{A}_n)$, $dE_n(f) = E_n(f) - E_{n-1}(f)$ for $n\in\N^+$, and $E_0 = 0$ as convention. The corresponding martingale square function is
\begin{align*}
    S(f) \defeq \left( 
    \sum_{n=1}^\infty \abs{dE_n(f)}^2 \right)^{\frac{1}{2}}.
\end{align*}

As pointed out in \cite{2023arXiv231201717Z}, martingale differences $\{dE_n(f)\}_{n\in\N^+}$ satisfy Type III (and so Type IV) superorthogonality for any $r\in\N^+$, so by Corollary \ref{infinite_terms}, we have
\begin{align*}
    \norm{f}_{p} \leq 2(p+2) \norm{S(f)}_p
\end{align*}
for $p\geq 2$.

Recall the Burkholder-Gundy inequality:
\begin{thm}[{\cite[Theorem 3.1]{burkholder1988sharp}}]\label{Burkholder}
    Let $1<p<\infty$ and $p^* = \max\{p,\frac{p}{p-1}\}$. Then
    \begin{align}\label{burkholder-gundy}
        (p^*-1)^{-1}\norm{S(f)}_p \leq \norm{f}_p \leq (p^*-1)\norm{S(f)}_p.
    \end{align}
    In particular,
    \begin{align}
        \norm{f}_p \geq (p-1)\norm{S(f)}_p & \text{ if } 1<p\leq 2,\nonumber\\
        \norm{f}_p \leq (p-1)\norm{S(f)}_p & \text{ if } 2\leq p <\infty.\label{recover}
    \end{align}
    Moreover, the constant $p-1$ is best possible.
\end{thm}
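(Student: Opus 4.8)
The plan is to obtain the two-sided estimate (\ref{burkholder-gundy}) from one-sided inequalities in the range $2\le p<\infty$ (where $p^*=p$) and then reach $1<p<2$ by duality. Since martingale differences are orthogonal in $L^2$, one has the levelwise bound
\[
\abs{\inner{f,g}}=\abs{\textstyle\sum_n E\big[dE_n(f)\,\overline{dE_n(g)}\big]}\le E\big[S(f)S(g)\big]\le \norm{S(f)}_p\norm{S(g)}_q
\]
for conjugate exponents $p,q$, which together with $\norm{f}_p=\sup\{\abs{\inner{f,g}}:\norm{g}_q\le 1\}$ converts the pair
\[
\norm{f}_p\le (p-1)\norm{S(f)}_p,\qquad \norm{S(f)}_p\le (p-1)\norm{f}_p\qquad(2\le p<\infty)
\]
into their analogues for $1<p<2$ with the sharp constant $p^*-1=1/(p-1)$ (alternatively, one may run the argument below directly in each range, with a range-dependent special function). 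So it is enough to treat these two; I will describe the representative case $\norm{f}_p\le (p-1)\norm{S(f)}_p$ with $p\ge 2$, the companion inequality being handled by the same scheme with its own majorant. Writing $f_n=\sum_{k\le n}dE_k(f)$ and $s_n=\big(\sum_{k\le n}\abs{dE_k(f)}^2\big)^{1/2}$, it then suffices to prove the finite-horizon bound $E\abs{f_n}^p\le (p-1)^pE s_n^p$ for each $n$; a routine limiting argument — monotone convergence on the right, and $L^p$-boundedness hence uniform integrability of $(f_n)_n$ on the left — recovers the inequality for $f$.

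The method I would use is Burkholder's \emph{special (Bellman) function} technique, which is the only route known to the sharp constant. One seeks $U=U_p\colon\R\times[0,\infty)\to\R$ enjoying: (i) the \emph{majorization} $U(x,y)\ge \abs{x}^p-(p-1)^p y^p$ for all $x,y$; (ii) the \emph{normalization} $U(0,0)\le 0$; and (iii) the \emph{diagonal concavity} that for each fixed $(x,y)$ the one-variable function $h\mapsto U\big(x+h,\sqrt{y^2+h^2}\big)$ lies on or below its tangent line at $h=0$. Granting such a $U_p$, the dynamics $f_{n+1}=f_n+dE_{n+1}(f)$, $s_{n+1}^2=s_n^2+\abs{dE_{n+1}(f)}^2$ and the mean-zero property $E[dE_{n+1}(f)\mid\mathscr{A}_n]=0$, combined with (iii) applied pointwise at $h=dE_{n+1}(f)$ followed by a conditional expectation, show that $\big(U_p(f_n,s_n)\big)_n$ is a supermartingale; hence $E\,U_p(f_n,s_n)\le U_p(f_0,s_0)=U_p(0,0)\le 0$, and (i) gives $E\abs{f_n}^p-(p-1)^pE s_n^p\le 0$. (In the continuous-time reformulation, reached by Skorokhod embedding or by the Burkholder--Davis--Gundy correspondence, condition (iii) becomes a degenerate-ellipticity inequality that makes $U_p\big(X_t,\langle X\rangle_t^{1/2}\big)$ a local supermartingale via It\^o's formula.)

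The hard part — indeed essentially all of the content — is to write down $U_p$ explicitly and to verify (i)--(iii); this is a genuine computation in which the restriction $p\ge 2$ and the exact value $p-1$ are consumed. Burkholder's function is explicit but intricate: it is assembled, with a $C^1$ match along $\{\abs{x}=(p-1)y\}$, from a leading piece of the flavour $\alpha_p\big(\abs{x}-(p-1)y\big)\big(\abs{x}+y\big)^{p-1}$, with $\alpha_p=p(1-1/p)^{p-1}\ge 1$, and a second polynomial on the complementary region; then (i) is checked by comparing the two sides along rays through the origin, while (iii), after scaling out $y$, reduces to a sign condition for an explicit function of one real variable. I expect the delicate points to be fixing the correct form of $U_p$ on $\{\abs{x}<(p-1)y\}$ and reconciling the $C^1$-matching with (iii) there.

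For the optimality of $p-1$ the plan is to produce, for each $\e>0$, a finite martingale with $\norm{f}_p>(p-1-\e)\norm{S(f)}_p$. The idea is to make the supermartingale above \emph{asymptotically a martingale}: run a martingale whose path remains, up to a vanishing error, inside the region where the majorant (i) is an equality and along which the tangent inequality (iii) is tight — Burkholder's explicit families of stopped one-dimensional walks do exactly this — so that $E\abs{f_n}^p/E s_n^p\to(p-1)^p$ as the horizon $n$ grows. Equivalently, one shows that the least function obeying (i) and (iii) already carries the constant $p-1$, so the method is saturated and $p-1$ is simultaneously an upper and a lower bound for the true constant.
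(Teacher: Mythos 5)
This statement is not proved in the paper at all: it is quoted verbatim from Burkholder \cite[Theorem 3.1]{burkholder1988sharp} and used as a black box (only to deduce sharpness of the order $O(r)$ in Theorem \ref{effective direct} and Corollary \ref{infinite_terms}), so there is no internal argument to compare yours against. Judged on its own, your proposal correctly identifies the only known route to the sharp constant, Burkholder's special (Bellman) function method, and the supermartingale mechanism you describe is the right one; but as written it is a plan rather than a proof. The entire content of the theorem is the explicit construction of $U_p$ and the verification of majorization, normalization, and diagonal concavity, and you explicitly defer exactly this ("a genuine computation"), offering only the flavour of one piece of $U_p$; likewise the optimality of $p-1$ is only gestured at via "Burkholder's explicit families of stopped walks" without producing or analyzing an extremal sequence. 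So the core of the argument is missing, not merely compressed.

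Two further concrete soft spots. First, the duality reduction handles only half of the range $1<p<2$: the pairing $\abs{\inner{f,g}}\le E\bigl[S(f)S(g)\bigr]\le\norm{S(f)}_p\norm{S(g)}_q$ together with $\norm{S(g)}_q\le(q-1)\norm{g}_q$ for $q\ge2$ does give $\norm{f}_p\le(p^*-1)\norm{S(f)}_p$ for $1<p<2$, but the companion bound $\norm{S(f)}_p\le(p^*-1)\norm{f}_p$ in that range does not follow from the same pairing, since $S$ is sublinear, not linear; one needs either an $L^p(\ell^2)$-duality plus a martingale-transform step (where the sharp constant is easily lost) or, as Burkholder actually does, a separate special function for that range — which is precisely the work your parenthetical "alternatively, run the argument directly in each range" sweeps under the rug. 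Second, with the paper's convention $E_0=0$ the first difference $dE_1(f)=E_1(f)$ is not conditionally mean zero, so your normalization (ii) should be $U_p(x,\abs{x})\le0$ for all $x$ (starting the supermartingale at time $1$), not merely $U_p(0,0)\le0$; this is fixable, but it is the kind of detail that only surfaces when the verification of (i)--(iii) is actually carried out, which is the part the proposal omits.
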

In fact, as $p\rightarrow\infty$ on the left hand side of (\ref{burkholder-gundy}), the order of the best constant is $O(p^{-1/2})$; as $p\rightarrow 1$ on the right hand side of (\ref{burkholder-gundy}), the order of the best constant is $O(1)$. See the references in \cite[Remark 3.1]{burkholder1988sharp} for details.

Therefore, our results provide an alternative proof of (\ref{recover}) up to an absolute constant factor. On the other hand, the sharpness of order $O(p)$ in (\ref{recover}) implies the sharpness of Theorem \ref{effective direct} and Corollary \ref{infinite_terms}.

One can consult (5.79) in \cite{burkholder1984boundary} for an explicit construction of the martingale which makes $p-1$ in (\ref{recover}) best possible. It's worth pointing out that there is no way to construct a sharp example for (\ref{recover}) with regular martingales (e.g., the standard dyadic martingale on $[0,1]$), in view of \cite[Lemma 2.9]{bourgain1989behavior}.

Note that Type IV superorthogonality is weaker than than Type III orthogonality, so our discussions actually show that the optimal order of the constant in the direct inequality under Type III orthogonality is also $O(r)$.

Another classical topic in harmonic analysis is the Littlewood-Paley inequality. Let $\Delta$ be the partition of $\R$ into the intervals $[2^{k-1}, 2^k)$ and $(-2^k, -2^{k-1}]$, $k\in\Z$. For any $R\in\Delta$, let $S_R$ be the Fourier projection operator associated to $R$, i.e., $\widehat{S_R(f)} = \mathbf{1}_{R}\widehat{f}$. The dyadic Littlewood-Paley square function of $f\in L^p(\R)$ is defined by 
\begin{align*}
    S^\Delta(f) \defeq \left(\sum_{R\in\Delta}\abs{S_R(f)}^2\right)^{\frac{1}{2}}.
\end{align*}
It is well known that for $1<p<\infty$, there exist two positive constants $L_{c,p}^\Delta$ and $L_{t,p}^\Delta$ such that
\begin{align}\label{cont_LP}
    (L_{t,p}^\Delta)^{-1}\norm{f}_p\leq \norm{S^\Delta(f)}_p \leq L_{c,p}^\Delta\norm{f}_p.
\end{align}
In \cite[Theorem 6.1.3]{loukas2014classical}, it's shown that both constants $L_{c,p}^\Delta$ and $L_{t,p}^\Delta$ are majorized by $O({p^*}^2)$ with $p^* = \max\{p,\frac{p}{p-1}\}$. However, the optimal orders have not been completely determined in the literature.

We can also formulate the periodic counterpart of (\ref{cont_LP}), where the corresponding dyadic partition of $\Z$ is $\widetilde{\Delta} = \{R\cap\Z: R\in\Delta\}$. We define $S_R$ in a similar way, then
\begin{align*}
    S^{\widetilde{\Delta}}(f) \defeq \left(\sum_{R\in\widetilde{\Delta}}\abs{S_R(f)}^2\right)^{\frac{1}{2}}
\end{align*}
for $f\in L^p(\T)$ where $\T\defeq[0,1]$. So (\ref{cont_LP}) becomes
\begin{align}\label{disc_LP}
    (L_{t,p}^{\widetilde{\Delta}})^{-1}\norm{f}_p\leq \norm{S^{\widetilde{\Delta}}(f)}_p \leq L_{c,p}^{\widetilde{\Delta}}\norm{f}_p.
\end{align}

By a de Leeuw type transference principle, we have
\begin{thm}[{\cite[Theorem 9]{xu2022optimal}}]
    Let $1<p<\infty$. Then the best constants in (\ref{cont_LP}) and (\ref{disc_LP}) satisfy
    \begin{align*}
        L_{c,p}^{\Delta} = L_{c,p}^{\widetilde{\Delta}} \quad \text{and}\quad L_{t,p}^{\Delta} = L_{t,p}^{\widetilde{\Delta}}.
    \end{align*}
\end{thm}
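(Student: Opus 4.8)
The statement is a de Leeuw--Jodeit type transference principle between $\R$ and $\T$, and the plan is to prove it by the classical sampling/periodization machinery, paying attention to two features of the present situation: the relevant multiplier is operator- ($\ell^2$-)valued rather than scalar, and the dyadic partition $\Delta$ of $\R$ is invariant under dilation by \emph{every} power of $2$, which is exactly what makes the periodic model at scale $2^j$ an exact (not merely approximate) copy of the one at scale $1$. Encode the two square functions by the operator-valued Fourier multipliers $m^\Delta(\xi)=(\mathbf 1_R(\xi))_{R\in\Delta}\colon\C\to\ell^2(\Delta)$ and, for $n\in\Z$, $m^{\widetilde{\Delta}}(n)=(\mathbf 1_{R\cap\Z}(n))_{R\in\Delta}=(\mathbf 1_R(n))_{R\in\Delta}$, so that $L^\Delta_{c,p}$ and $L^{\widetilde{\Delta}}_{c,p}$ are the $L^p(\R)\to L^p(\R;\ell^2)$ and $L^p(\T)\to L^p(\T;\ell^2)$ norms of the associated operators, while $L^\Delta_{t,p}$ and $L^{\widetilde{\Delta}}_{t,p}$ are the norms of the transposed ``reconstruction'' operators $\ell^2\to\C$. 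It thus suffices to prove the multiplier-norm identity for $m^\Delta$ versus $m^{\widetilde{\Delta}}$; I would do the $c$-case, the $t$-case being identical after transposing domain and target.

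\textbf{Step 1: $L^{\widetilde{\Delta}}_{c,p}\le L^\Delta_{c,p}$ (restriction to $\Z$).} Let $g=\sum_{|n|\le M}a_n e^{2\pi i n x}$ be a trigonometric polynomial on $\T$. Pick a profile $\phi$ on $\R$ with $\widehat\phi$ supported in a short interval of length $\delta<1$ placed on one side of the origin (on the right for the nonnegative-frequency part of $g$, on the left for the negative-frequency part) and $\widehat\phi(0)=1$, and set $G_\varepsilon(x)=\phi(\varepsilon x)g(x)$, with the two one-sided profiles applied to $g_+$ and $g_-$. Then $\widehat{G_\varepsilon}$ is a sum of disjoint bumps $\varepsilon^{-1}a_n\widehat\phi(\varepsilon^{-1}(\cdot-n))$, and — because $R\cap\Z$ is exactly the set of integers lying in the half-open interval $R$ and the one-sidedness is on the correct side — for $\varepsilon$ small each bump lies strictly inside the unique $R\in\Delta$ containing $n$. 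Hence $S_{R\cap\Z}(G_\varepsilon)(x)=\phi(\varepsilon x)\,(S_{R\cap\Z}g)(x)$ and so $S^\Delta(G_\varepsilon)(x)=|\phi(\varepsilon x)|\,S^{\widetilde{\Delta}}(g)(x)$. Applying the continuous inequality to $G_\varepsilon$ and letting $\varepsilon\to0$, modulated dilates equidistribute (Riemann sums), yielding $\|S^{\widetilde{\Delta}}(g)\|_{L^p(\T)}\le L^\Delta_{c,p}\|g\|_{L^p(\T)}$; here one needs $\phi\in L^p(\R)$ for every $p\in(1,\infty)$, which holds for the natural choice (e.g. $\widehat\phi=\mathbf 1_{[0,\delta)}$ gives $\phi(x)=O(|x|^{-1})$). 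Density of trigonometric polynomials gives the claim.

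\textbf{Step 2: $L^\Delta_{c,p}\le L^{\widetilde{\Delta}}_{c,p}$ (fine-lattice limit).} For $j\in\N$ the restriction of $m^\Delta$ to the lattice $2^{-j}\Z$ is $m^\Delta(2^{-j}n)=(\mathbf 1_R(2^{-j}n))_R=(\mathbf 1_{2^jR}(n))_R$, and since $R\mapsto 2^jR$ is a bijection of $\Delta$ onto itself, this is $m^{\widetilde{\Delta}}$ up to a permutation of the $\ell^2(\Delta)$-coordinates, hence has the same multiplier norm. After rescaling $\R/2^j\Z\to\T$, the norm of the operator with symbol $m^\Delta|_{2^{-j}\Z}$ on $L^p(\R/2^j\Z)\to L^p(\R/2^j\Z;\ell^2)$ therefore equals $L^{\widetilde{\Delta}}_{c,p}$ for every $j$. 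On the other hand, the continuous multiplier norm of $m^\Delta$ is recovered as $j\to\infty$: given $f\in L^p(\R)$ with smooth compactly supported Fourier transform, periodize the dilate $f(2^j\cdot)$ at period $1$, so its Fourier coefficients sample $\widehat{f(2^j\cdot)}$ on $\Z$, apply the (rescaled) discrete inequality, and use near-disjointness of the translates together with the same equidistribution to pass to the limit, getting $\|S^\Delta(f)\|_{L^p(\R)}\le L^{\widetilde{\Delta}}_{c,p}\|f\|_{L^p(\R)}$; density finishes it. Combining with Step 1, $L^\Delta_{c,p}=L^{\widetilde{\Delta}}_{c,p}$, and transposing everything gives $L^\Delta_{t,p}=L^{\widetilde{\Delta}}_{t,p}$.

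\textbf{Main obstacle.} The delicate point is that the symbols $\mathbf 1_R$ are discontinuous precisely at the integers $\pm2^k$, so the sampling at those points does not fall under the cleanest ``continuous multiplier'' form of de Leeuw's theorem. One must make the combinatorics of the half-open convention ($R=[2^{k-1},2^k)$ and $R\cap\Z$) match exactly: the one-sided placement of $\widehat\phi$ in Step 1 and the identification of $2^jR$ with a block of $\widetilde{\Delta}$ in Step 2 must be oriented so that each boundary frequency $\pm2^k$ is assigned to the same block on both sides, lest a spurious constant at those finitely many frequencies destroy sharpness. The second burden is making the two limiting arguments genuinely lossless — no constant $>1$ leaking in — which is why the profile $\phi$ and all normalizations must be chosen exactly rather than up to constants; these equidistribution and density reductions are routine in spirit but must be executed with care.
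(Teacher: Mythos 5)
Note first that the paper does not prove this statement: it is imported verbatim from \cite{xu2022optimal}, so your proposal can only be measured against the standard de Leeuw--type transference argument of that reference, whose two-step architecture (restriction of the operator-valued symbol to $\Z$, and recovery of the continuous constant by sampling on $2^{-j}\Z$ using the invariance of $\Delta$ under dilation by $2$, followed by periodization) your sketch correctly reproduces for the constants $L_{c,p}$. There are, however, two genuine gaps. First, the reduction of the $t$-case ``by transposing domain and target'' is not valid: the best constant in $\norm{f}_p\leq C\norm{S^\Delta f}_p$ is \emph{not} the $L^p(\ell^2)\to L^p$ norm of the reconstruction map $(g_R)_R\mapsto \sum_R S_Rg_R$, nor the norm of any adjoint of the square-function operator. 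Testing the reconstruction map on a single coordinate shows its norm dominates $\norm{S_{R_0}}_{L^p\to L^p}\gtrsim (p-1)^{-1}$ as $p\to 1^+$, whereas $L^\Delta_{t,p}\approx 1$ for $1<p\leq 2$ (as recorded in the quoted Corollary of \cite{xu2022optimal}); so no exact multiplier-norm identity can deliver $L^\Delta_{t,p}=L^{\widetilde{\Delta}}_{t,p}$ by transposition. The repair is to transfer the inequality itself: your test functions control $\norm{G_\e}_p$ and $\norm{S^\Delta G_\e}_p$ simultaneously, so you can feed them directly into the reverse inequality, restricting on $\T$ to mean-zero $g$ (the constant term is annihilated by $S^{\widetilde{\Delta}}$ and its Fourier bump would otherwise pollute $S^\Delta G_\e$ through the dyadic cells near the origin) and, in Step 2, to $f$ with $\widehat f$ supported away from $0$.

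Second, the issue you yourself flag as the ``main obstacle'' --- the discontinuity of the symbols $\mathbf 1_R$ at the sample points $\pm 2^k$ --- is not actually resolved by your two one-sided profiles. Taking $\widehat{\phi_+}$ supported in $[0,\delta)$ for $g_+$ and $\phi_-=\overbar{\phi_+}$ for $g_-$ does give $S^\Delta(G_\e)=\abs{\phi(\e x)}\,S^{\widetilde{\Delta}}(g)$, but then $G_\e$ is no longer $\phi(\e x)g(x)$: the two frequency half-lines are multiplied by amplitudes with opposite phases, and any $\phi_+$ with one-sided Fourier support necessarily has total phase rotation of order $1$ across its essential support, so the equidistribution limit of $\norm{G_\e}_p^p$ is $\int_\R\abs{\phi_+(y)}^p\,\norm{e^{i\theta(y)}g_++e^{-i\theta(y)}g_-}_{L^p(\T)}^p\,dy$ rather than $\norm{\phi}_p^p\norm{g}_{L^p(\T)}^p$; this produces a loss, uniform in $\delta$ and $\e$, exactly where sharpness of the constant is the whole point (using instead a single two-sided profile misassigns the boundary frequencies and proves the inequality for a different partition of $\Z$). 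A clean fix uses the \emph{full} dilation invariance of the continuous constants, not only by powers of $2$: replace $\Delta$ by $a\Delta$ with $a$ irrational and slightly less than $1$, which leaves $L^\Delta_{c,p}$ and $L^\Delta_{t,p}$ exactly unchanged, makes every integer an interior point of its cell, and makes the induced partition of $\Z$ coincide with $\widetilde{\Delta}$ on any prescribed finite range of frequencies; then a single smooth two-sided profile suffices and the limiting argument is lossless on trigonometric polynomials, with density finishing the proof. With those two repairs your outline becomes a correct proof along the same lines as the cited source; as written, both the $t$-reduction and Step 1 are incomplete.
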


The current world records for the optimal orders of the constants $L_{c,p}^\Delta$ and $L_{t,p}^\Delta$ are as follows:
\begin{thm}[{\cite[Corollary 11]{xu2022optimal}}]
    Let $1<p<\infty$, then 
    \begin{itemize}
        \item $L_{c,p}^\Delta \approx (\frac{p}{p-1})^{3/2}$ for $1<p\leq 2$ and $L_{c,p}^\Delta \approx p$ for $2\leq p<\infty$;
        \item $L_{t,p}^\Delta \approx 1$ for $1<p\leq 2$ and $p^{1/2}\lesssim L_{t,p}^\Delta \lesssim p$ for $2\leq p<\infty$.
    \end{itemize}
\end{thm}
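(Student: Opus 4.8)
The final statement synthesizes results of \cite{xu2022optimal} with earlier work, and I would not attempt to reprove all of it; the piece our framework actually reaches is the upper bound $L_{t,p}^{\Delta}\lesssim p$ for $2\le p<\infty$, that is, the reverse Littlewood--Paley inequality in the range of large exponents. The lower bounds (notably $p^{1/2}\lesssim L_{t,p}^{\Delta}$), the orders for $1<p\le 2$, and all of the bounds for $L_{c,p}^{\Delta}$ I would simply quote from \cite{xu2022optimal}.

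For $L_{t,p}^{\Delta}\lesssim p$ with $p\ge 2$ the plan is: (i) by the transference theorem above, reduce to the periodic partition and prove $\norm{f}_p\le Cp\,\norm{S^{\widetilde{\Delta}}(f)}_p$ for $f\in L^p(\T)$; (ii) exactly as in Corollary~\ref{infinite_terms}, reduce by vector-valued interpolation to the case $p=2r$ with $r\in\N^+$, so that it suffices to establish $\norm{f}_{2r}\le Cr\,\norm{S^{\widetilde{\Delta}}(f)}_{2r}$; (iii) recognize the latter as the direct inequality (\ref{direct}) for the family $\{S_R f\}_{R\in\widetilde{\Delta}}$, since $\sum_{R\in\widetilde{\Delta}}S_R f=f$ modulo the mean (which is harmless), and attempt to invoke Theorem~\ref{effective direct}. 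The delicate point in (iii) is the Type~IV vanishing condition for $\{S_R f\}$: because the dyadic blocks are \emph{adjacent} rather than separated, distinct blocks can resonate --- for instance $\int_{\T}S_{[4,8)\cap\Z}f\;\overline{S_{[2,4)\cap\Z}f}\;S_{(-2,-1]\cap\Z}f\;\overline{S_{[1,2)\cap\Z}f}\,dx$ need not vanish --- so Type~IV fails verbatim. To bypass this I would route through the martingale realization: the Burkholder--Gundy inequality recovered above already gives $\norm{g}_p\le 2(p+2)\,\norm{S(g)}_p$ for the dyadic martingale square function with the sharp order $O(p)$, and the remaining task is to dominate $\norm{f}_p$ by $\norm{S^{\widetilde{\Delta}}(f)}_p$ by comparing the trigonometric square function with a genuinely superorthogonal (martingale-type) square function up to an absolute constant, so that the power of $p$ is not degraded.

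The main obstacle is precisely that comparison: one must trade the non-superorthogonal Fourier blocks for a superorthogonal family --- via randomization (a Rademacher/Marcinkiewicz--Zygmund argument reducing matters to martingale transforms with Burkholder's sharp constant), or via a direct transference to a Haar/martingale square function --- while keeping all constants $O(1)$, so that the final order stays $O(p)$. Once that comparison is in hand, tracking the constants through step (ii) is routine given Corollary~\ref{infinite_terms}, and steps (i) and (iii) are purely mechanical.
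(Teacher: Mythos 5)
This statement is not proved in the paper at all: it is quoted verbatim from \cite[Corollary 11]{xu2022optimal}, so citing the lower bounds, the $1<p\leq 2$ cases, and the bounds for $L_{c,p}^{\Delta}$ is exactly what the paper does, and your decision to do the same is fine. The issue is with the one piece you do try to rederive, $L_{t,p}^{\Delta}\lesssim p$ for $p\geq 2$. You correctly observe that Type IV superorthogonality fails verbatim for the full family $\{S_R f\}_{R\in\Delta}$ because adjacent dyadic blocks can resonate, but your proposed bypass --- trading the Fourier blocks for a martingale/Haar square function via randomization or transference ``while keeping all constants $O(1)$'' --- is precisely the hard step, and you leave it as an acknowledged obstacle. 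Such a comparison is essentially a vector-valued singular integral (or Stein/Bakas--Zhang type) statement; it is the ``somewhat deep harmonic analysis'' that the paper's route is designed to avoid, and as the paper remarks it also introduces dimension dependence in higher-dimensional analogues. So as written the proposal has a genuine gap at its central step.

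The fix the paper has in mind (following \cite{2023arXiv231201717Z}) is much simpler and stays entirely inside the superorthogonality framework: split $\{S_R f\}_{R\in\Delta}$ into a \emph{bounded} number of subfamilies of well-separated blocks, e.g.\ positive and negative frequencies separately and the dyadic index $k$ in residue classes modulo a fixed small integer. Within one subfamily, if $l_1,\dots,l_{2r}$ are all distinct, then the Fourier support of $S_{R_{l_1}}f\,\overline{S_{R_{l_2}}f}\cdots S_{R_{l_{2r-1}}}f\,\overline{S_{R_{l_{2r}}}f}$ avoids the origin, because the single block of largest modulus exceeds the sum of the moduli of all the other (distinct, lacunary-separated) blocks --- and this holds for every $r$ simultaneously, since the blocks are distinct so their total contribution is controlled by a geometric series, not by $r$. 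Hence each subfamily is Type IV superorthogonal for all $r$, Corollary \ref{infinite_terms} applies to each with constant $O(p)$, and the triangle inequality over the absolutely bounded number of subfamilies costs only an absolute factor, giving $L_{t,p}^{\widetilde{\Delta}}\lesssim p$ and, by the transference theorem, $L_{t,p}^{\Delta}\lesssim p$. I recommend replacing your martingale-comparison step with this grouping argument; otherwise your outline does not actually yield the $O(p)$ bound by the paper's methods.
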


Bourgain \cite{bourgain1985square, bourgain1989behavior} first studied the problem on the optimal orders of the
above constants, and there have been many people working on it. One can see \cite[Section 1.2]{xu2022optimal} for comprehensive historical comments. After so many years, the only missing piece of the puzzle is the optimal order of $L_{t,p}^\Delta$ for $2\leq p <\infty$. The lower bound $p^{1/2}\lesssim L_{t,p}^\Delta$ follows from the optimal order of the best constant in the Khintchine inequality for $2\leq p<\infty$. Note that the Khintchine inequality corresponds to Type I superorthogonality, see \cite[Section 2.2]{PierceOnSuperorthogonality}. The upper bound $L_{t,p}^\Delta \lesssim p$ was proved independently by Odysseas Bakas and Hao Zhang, and the proof is recorded in \cite[Section 5]{xu2022optimal}. A previous related upper bound was $L_{t,p}^{\widetilde{\Delta}} \lesssim p\log p$ given in \cite{pichorides1990note}.

Now we come back to our purpose. As pointed out in \cite{2023arXiv231201717Z}, by dividing $\{S_R(f)\}_{R\in\Delta}$ into finitely many groups, we have Type IV superorthogonality for each group, and so Corollary \ref{infinite_terms} can be applied to give an alternative proof of $L_{t,p}^\Delta \lesssim p$. It's striking because the original proof by Odysseas Bakas and Hao Zhang involves somewhat deep harmonic analysis, while our proof relies purely on the structure of Type IV superorthogonality, which ultimately comes down to combinatorics and number theory. In other words, our proof indicates that Type 
IV superorthogonality is \textit{all} we need for the currently best order of the
constant in the reverse Littlewood-Paley inequality when $p\geq 2$. Besides, the sharpness of our main results implies that if we hope to break the $O(p)$ barrier, then we have to exploit some information in Littlewood-Paley theory \textit{beyond} Type IV superorthogonality. 

Finally, we remark that our results are quite robust as they can be equally applied to reverse Littlewood-Paley inequalities in higher dimensional cases with smooth frequency truncation functions supported on annuli. In this case, the order of the constant is still $O(p)$, which is dimension free. In contrast, if one argues as in \cite[Section 5]{xu2022optimal}, then there will be dependence on dimensions due to the use of vector-valued singular integral theory.

\subsection{Variants of Type IV superorthogonality}\label{subsec:variants}\phantom{x}

As another application of Theorem \ref{main result} (or Corollary \ref{identity_multilinear}), we introduce a new hierarchy of superorthogonality, which can be regarded as variants of Type IV superorthogonality, and prove the related converse inequality. We do not focus on formal constants in this section.
\begin{defn}[$s$-Type IV]
    Let $\{f_l\}_{1\leq l\leq L}$ be a finite family of complex-valued functions on $L^{2r}(X,d\mu)$, where $r\in\N^+$. For $s\in\N^+$ with $0\leq s \leq r$, we say $\{f_l\}_{1\leq l\leq L}$ satisfies ``$s$-Type IV'' superorthogonality for $2r$-tuples, if the vanishing property (\ref{condition}) holds whenever  the indices $l_i(1\leq i\leq 2r-2s)$, $l_{2j-1}(r-s < j\leq r)$ are all distinct, and $l_{2j-1} = l_{2j}$ for all $r-s < j\leq r$.
\end{defn}

When $s=0$, this is exactly Type IV superorthogonality. 

When $s=1$, this means that we allow one equal pair of indices in the original Type VI assumption:
\begin{align*}
    \int_X f_{l_1}\overbar{f_{l_2}} \cdots f_{l_{2r-3}}\overbar{f_{l_{2r-2}}}\abs{f_{l_{2r-1}}}^2 d\mu = 0.
\end{align*}
whenever $l_1,\cdots,l_{2r-1}$ are all distinct.

When $s=r$, this means that we have $r$ equal pair of indices, but require the pairs to be all distinct:
\begin{align*}
    \int_X \abs{f_{l_1}}^2 \abs{f_{l_3}}^2\cdots \abs{f_{l_{2r-1}}}^2 d\mu = 0
\end{align*}
whenever $l_{2j-1}$ ($j\in[r]$) are all distinct. Note that this essentially implies that there is no point belonging to all the supports of $\{f_{l_{2j-1}}\}_{j\in[r]}$.

Some simple facts are that $s$-Type IV superorthogonality is weaker than Type I$^*$/Type I/Type II superorthogonality if $s\neq r$, and $r$-Type IV superorthogonality is stronger than any other $s$-Type IV ($s\neq r$) superorthogonality.

One may have his or her own expectation regarding whether $s$-Type VI superorthogonality will become stronger or weaker as $s$ grows. However, the test sets of index tuples for different $s$-Type IV are actually disjoint, so there is no obvious relationship between different $s$-Type VI superorthogonality. Now we provide examples to show that we cannot deduce one from another in general.
\begin{exmp}\label{invent_example}
    We will construct a family of $\{f_l\}_{1\leq l\leq L}$ which is superorthogonal for $2r$-tuples of $s_0$-Type IV ($0\leq s_0 \leq r-1$) but not of any other $s$-Type IV ($0\leq s \leq r-1$, $s\neq s_0$). Note that we exclude the $s=r$ case (as we already know it's the strongest), and so $r-s\in[r]$.
    
    Let $X$ be $\T^N\defeq[0,1]^N$ ($N\in\N^+, N\geq r$) and $d\mu$ be the standard Lebesgue measure on $X$. For $k\in[N]$, let $x_k$ be the $k$-th coordinate in $\T^N$, and $v_k\in\Z^N$ be the unit vector with $k$-th coordinate 1 and other coordinates $0$. 
    
    If $r-s_0 \neq 1$, let $\B\defeq \{e^{2\pi i 2^jx_1}\}_{j\in[r-s_0-1]}$. Otherwise, let $\B=\varnothing$. 
    
    For $r-s_0< k \leq N$, construct a set of functions on $\T^N$: $\B_{k} \defeq \{e^{2\pi i 2^j2^kx_k}\}_{j\in [k-1]}\cup\{e^{2\pi i (2^k+1-2^{-k})2^kx_k}\}\cup\{e^{2\pi i (2^j+2^{-j})2^kx_k}\}_{j\in [k]}$. Let $\{f_l\}_{1\leq l\leq L}$ be $\cup_{r-s_0<k\leq N} \B_k$ plus two copies of $\B$. We claim that such $\{f_l\}_{1\leq l\leq L}$ suffices.

    Now we prove the claim. Note that our $f_l$'s are of the form $e^{2\pi i C_l\cdot x}$ ($x\in \T^N, C_l\in\Z^N$), which ensures $\abs{f_l}\equiv1$. By the Plancherel theorem, we only need to check that the set of frequencies $\{C_l\}_l$ contains an ``$(r-s,r-s)$-additive structure'' for each $s\neq s_0$, but avoids any ``$(r-s_0,r-s_0)$-additive structure''. Here an ``$(t_1,t_2)$-additive structure'' ($t_1,t_2\in[r]$) is a set of all distinct frequencies $y_1,\dots,y_{t_1}$, $z_1,\dots,z_{t_2}$ $\in\{C_l\}_l$ with $y_1+\cdots+y_{t_1} = z_1+\cdots+z_{t_2}$.

    The set of frequencies $\{C_l\}_l$ consists of $\mathcal{A}_k \defeq \{2^j2^kv_k\}_{j\in[k-1]}\cup \{(2^k+1-2^{-k})2^kv_k\} \cup\{(2^j+2^{-j})2^kv_k\}_{j\in[k]}$, $r-s_0<k\leq N$, plus two copies of $\mathcal{A} \defeq \{2^jv_1\}_{j\in[r-s_0-1]}$ if $r-s_0\neq1$. Note that two copies of $\mathcal{A}$ guarantee the existence of an $(t,t)$-additive structure for each $t\in[r-s_0-1]$ (if $r-s_0=1$ we skip this step), and the identity 
    \begin{align}\label{k-th_coord}
        \sum_{j=1}^{k-1}2^j2^kv_k + (2^k+1-2^{-k})2^kv_k = \sum_{j=1}^{k}(2^j+2^{-j})2^kv_k
    \end{align}
    guarantees the existence of $(k,k)$-additive structure for each $r-s_0<k\leq r(\leq N)$. So it remains to show that $\{C_l\}_l$ avoids any ``$(r-s_0,r-s_0)$-additive structure''.

    A key observation is that if $\mathcal{A}_k$ ($r-s_0<k\leq N$) contains a $(t_1,t_2)$-additive structure ($t_1,t_1\in\N^+$, $t_1+t_2\leq 2k$), then $t_1=t_2=k$ and the structure is given by (\ref{k-th_coord}). Here is the reason. Ignoring all common $2^k v_k$, we need to establish an identity by distinct terms in $\{2^j\}_{j\in[k-1]}\cup\{2^k+1-2^{-k}\}\cup\{2^j+2^{-j}\}_{j\in[k]}$. First focus on the fractional part. If one side has $2^k+1-2^{-k}$, then the other side must contains all $\{2^j+2^{-j}\}_{j\in[k]}$, so (\ref{k-th_coord}) must hold. If there is no $2^k+1-2^{-k}$ on both sides, but one side involves some term in $\{2^j+2^{-j}\}_{j\in[k]}$, then contradiction arises, since we can't establish an identity by distinct terms in $\{2^{-j}\}_{j\in [k]}$ in view of the base-2 expansion. Therefore, the identity can only involve terms in $\{2^j\}_{j\in[k-1]}$. However, this can't be done, still in view of the base-2 expansion. So the only possible case is (\ref{k-th_coord}), and we must have $k$ terms on both sides.

    Now we come back to show the impossibility of an ``$(r-s_0,r-s_0)$-additive structure'' in $\{C_l\}_l$. First, by linear independence of $\{v_k\}_{k\in[N]}$, this additive structure can be projected to each coordinate. If the projection along $v_k$ ($r-s_0<k\leq N$) is nontrivial, then by the observation just made, we must have $k>r-s_0$ terms on both sides, a contradiction. So both sides only involve terms from two copies of $\mathcal{A}$. But $2\abs{\mathcal{A}}=2(r-s_0-1) <2(r-s_0)$, so the number of terms are not enough, which is still a contradiction.

    Thus the proof of our previous claim is completed.
\end{exmp}
\begin{rmk}
    Taking $N=\infty$, we can extend Example \ref{invent_example} to the compact abelian group $\T^\infty$, with $\{f_l\}_l$ interpreted as characters on it.
\end{rmk}

In particular, Example \ref{invent_example} implies that we can't deduce Type IV superorthogonality from $s$-Type IV ($0<s<r$), and vice versa. It's also worth pointing out a classical example:
\begin{exmp}[\cite{gressman2023new} Haar functions]
    Let $\LL$ denote the set of standard dyadic intervals $I=[2^k \ell, 2^k (\ell+1))$ in $\R$ with $k,\ell\in\Z$. Each $I\in\LL$ is associated with Haar function
    \begin{align*}
        \psi_I = \abs{I}^{-1/2}(\mathbf{1}_{I_l}-\mathbf{1}_{I_r}),
    \end{align*}
    where $I_l,I_r\in\LL$ denote the left and the right children of $I$, respectively. Then the family of Haar functions $\{\psi_I\}_{I\in\LL}$, indexed by the set of dyadic intervals, is superorthogonal of Type IV on $\R$. If $I\subseteq [0,1]$, $\forall\,I\in\LL$, then Type III also holds. However, $s$-Type IV fails if $s\neq0$, since we can always take $f_{2r-1}$ to be a Haar function whose associated dyadic interval is contained in all the others, which forces the integral in the vanishing condition (\ref{condition}) to be $1$($\neq0$).
\end{exmp}

Now we briefly discuss some inequalities under $s$-Type IV superorthogonality. Direct inequalities seems hard to obtain by arguments in \cite{2023arXiv231201717Z}, as each term is mixed and there is no way to extract a term $\abs{\sum_{l=1}^L f_l}^{2r}$. However, the proof the converse inequality in \cite{2023arXiv231201717Z} still works:
\begin{thm}\label{converse_variant}
    Fix $r\in\N^+$ and a $\sigma$-finite measure space $(X,d\mu)$. Assume that $\{f_l\}_{1\leq l\leq L}$ is a finite family of complex-valued functions in $L^{2r}(X,d\mu)$, which satisfies $s$-Type IV superorthogonality ($0\leq s\leq r$). Then we have
    \begin{align*}
        \norm{\left(\sum_{l=1}^L\abs{f_l}^2\right)^{\frac{1}{2}}}_{2r}
        \lesssim_r 
        \norm{\sum_{l=1}^L f_l}_{2r},
    \end{align*}
    provided that (\ref{necessary}) holds.
\end{thm}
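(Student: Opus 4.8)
The plan is to follow the strategy of \cite{2023arXiv231201717Z} for the converse inequality under ordinary Type IV superorthogonality, but to feed in Corollary \ref{identity_multilinear} with a cleverly chosen partition $\PP_1$ so that the ``pre-paired'' indices $l_{2j-1}=l_{2j}$ ($r-s<j\leq r$) are already glued together in $\PP_1$, and only the first $2r-2s$ slots carry genuinely distinct indices. Concretely, I would let $n=2r$, take $V_j=\C$ and $v_{j,l}=f_l$ for all $j$, and let $\Lambda(v_1,\dots,v_{2r})=v_1\overbar{v_2}\cdots v_{2r-1}\overbar{v_{2r}}$; then I would choose $\PP_1$ to be the good partition of $[2r]$ consisting of the singletons $\{1\},\dots,\{2r-2s\}$ together with the pairs $\{2r-2s+1,2r-2s+2\},\dots,\{2r-1,2r\}$. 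The distinct sum $\sum^{*}_{(l_P)_{P\in\PP_1}}\Lambda(\dots)$ then runs exactly over the index configurations covered by the $s$-Type IV hypothesis, so integrating over $X$ kills it. Corollary \ref{identity_multilinear} rewrites this distinct sum as $\sum_{\PP\geq\PP_1}\DD(\PP_1,\PP)\sum_{(l_P)_{P\in\PP}}\Lambda(\dots)$, and the $\PP=\PP_1$ term is (up to the constant $\DD(\PP_1,\PP_1)=1$) the ``model'' object $\int_X \bigl(\sum_l f_l\bigr)^{?}\cdots$ — more precisely $\int_X \bigl|\sum_l f_l\bigr|^{2r-2s}\bigl(\sum_l |f_l|^2\bigr)^{s}$ is \emph{not} quite what appears, so the bookkeeping here needs care; what actually appears for $\PP=\PP_1$ is $\sum_{(l_P)_{P\in\PP_1}}\prod(\text{blocks})$, which factors because the blocks are on disjoint index variables.

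After isolating the $\PP=\PP_1$ term, I would move everything else to the other side and estimate: $\int_X(\text{model term})\,d\mu$ is bounded by $\sum_{\PP>\PP_1}|\DD(\PP_1,\PP)|\int_X|\sum_{(l_P)_{P\in\PP}}\Lambda(\dots)|\,d\mu$. For each $\PP>\PP_1$ I would bound the integrand pointwise by splitting the product over blocks $P\in\PP$ according to whether $|P|=1$ (giving a factor $|\sum_l f_l|$) or $|P|\geq 2$ (giving a factor $(\sum_l|f_l|^{|P|})^{1}\leq(\sum_l|f_l|^2)^{|P|/2}$), exactly as in the proof of Theorem \ref{effective direct}; then apply H\"older to write each term as a product of powers of $\|\sum_l f_l\|_{2r}$ and $\|(\sum_l|f_l|^2)^{1/2}\|_{2r}$. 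Since $\PP>\PP_1$ forces $\#_1^{\PP}\leq 2r-2s-2 < 2r$, the resulting inequality is a polynomial inequality in the ratio $t$ of the two norms with the top power of $t$ on the left strictly dominating all powers on the right after one uses the hypothesis (\ref{necessary}) to absorb the ``pure $\ell^{2r}$'' block factors; rescaling and a Fibonacci-type induction as in Theorem \ref{effective direct} then yields $t\lesssim_r 1$, which is the claim.

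The subtle point — and the step I expect to be the main obstacle — is correctly matching the combinatorial output of Corollary \ref{identity_multilinear} with partition $\PP_1\neq\PP_0$ to the structure of the $s$-Type IV hypothesis, in particular verifying that the distinct sum over $(l_P)_{P\in\PP_1}$ is precisely annihilated by integration (one must check the hypothesis is stated over exactly the right tuples, including the constraint that the $s$ glued pairs' common values are distinct from each other and from the free indices) and that every $\PP>\PP_1$ either contains a block of size $\geq 3$ or merges a free singleton with another block — in both cases producing a factor that is controlled by (\ref{necessary}) or by the square-function norm. A secondary technical wrinkle is that the model term for $\PP=\PP_1$ is a sum over independent indices of a product of block contributions, some of which are $|\sum_l f_l|^2$-type (from pairs $\{2j-1,2j\}$ with $j\leq r-s$ that happen to stay unmerged) versus $\sum_l|f_l|^2$-type (from the pre-glued pairs); one needs the pointwise inequality $\sum_l|f_l|^2\leq|\sum_l f_l|^2$ to \emph{fail} in general, so the model term cannot simply be identified with $\|\sum f_l\|_{2r}^{2r}$ — instead it must itself be estimated below using only (\ref{necessary}) on the glued blocks, which is exactly why the hypothesis (\ref{necessary}) is needed and why the argument is genuinely the one in \cite{2023arXiv231201717Z} rather than a direct-inequality-style argument. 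Once these identifications are pinned down, the remaining optimization is routine and can be quoted from the proof of Theorem \ref{effective direct}.
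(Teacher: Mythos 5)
Your setup matches the paper's: the choice $\PP_1=\{\{1\},\dots,\{2r-2s\}\}\cup\{\{2r-2s+1,2r-2s+2\},\dots,\{2r-1,2r\}\}$, the application of Corollary \ref{identity_multilinear} with this $\PP_1$, and the observation that integrating the distinct sum over $\PP_1$ gives zero under the $s$-Type IV hypothesis are all correct. But the core of your argument --- isolating the $\PP=\PP_1$ term as the ``model'' term and then running a direct-inequality-style H\"older/polynomial optimization in the ratio $t$ --- is the wrong mechanism for a converse inequality and does not close. The $\PP=\PP_1$ term equals $\abs{\sum_l f_l}^{2(r-s)}\bigl(\sum_l\abs{f_l}^2\bigr)^{s}$, which is neither the square-function quantity you must dominate nor something you can usefully bound from below by it; and estimating it from below via (\ref{necessary}), as you suggest, points in the wrong direction, since (\ref{necessary}) controls the $\ell^{2r}$ square function \emph{above} by $\norm{\sum_l f_l}_{2r}$. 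Moreover, the terms you propose to treat as errors on the right include all the \emph{double} partitions $\PP\geq\PP_1$ (every block of size exactly $2$), which are pointwise of the same size as $(\sum_l\abs{f_l}^2)^r$, so absorbing them is circular; your claim that $\PP>\PP_1$ forces $\#_1^\PP\leq 2r-2s-2$ also fails when two pre-glued pairs merge (then $\#_1^\PP=2r-2s$).

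What the paper does instead, and what your proposal is missing, is that the square function is extracted from the totality of the double partitions: since $\PP_1$ is a good partition, Proposition \ref{useful} gives the uniform coefficient $\DD(\PP_1,\PP)=(-1)^s$ for every double $\PP\geq\PP_1$, so these terms carry a single common sign, and the ``folding-the-circle'' lemma (Lemma 7 of \cite{2023arXiv231201717Z}) yields the pointwise lower bound $\sum_{\PP\,\text{double}}\sum_{(l_P)_{P\in\PP}}\Lambda(\dots)\geq\bigl(\sum_l\abs{f_l}^2\bigr)^r$. Only then does one integrate, use the vanishing of the distinct sum, and bound the remaining single (some $\abs{P}=1$) and joint (some $\abs{P}\geq3$) terms exactly as in \cite{2023arXiv231201717Z}, which is where (\ref{necessary}) enters. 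Without the sign uniformity over double partitions and the folding lemma there is no way to make $\norm{(\sum_l\abs{f_l}^2)^{1/2}}_{2r}^{2r}$ appear on the dominant side; the paper itself also remarks that the direct-inequality optimization scheme does not transfer to the converse direction because the single terms destroy the homogeneity in $t$. So as written your proof has a genuine gap at its central step.
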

\begin{proof}
    Apply Corollary \ref{identity_multilinear} with $n=2r$, $V_j=\F=\C$, $v_{j,l} = f_{l}$ ($\forall\,j\in[2r], 1\leq l\leq L$), $\PP_1 = \cup_{1\leq i\leq 2r-2s}\{\{i\}\}\cup\cup_{r-s<j\leq r}\{\{2j-1,2j\}\}$, and $\Lambda: (v_1,\cdots,v_{2r})\mapsto v_1\overbar{v_2}\cdots v_{2r-1}\overbar{v_{2r}}$. Then we have
    \begin{align}\label{apply'}
        \sum_{(l_P)_{P\in\PP_1}}^* f_{l_{\PP_1(1)}}\overbar{f_{l_{\PP_1(2)}}}\cdots f_{l_{\PP_1(2r-1)}}\overbar{f_{l_{\PP_1(2r)}}}
        & = \sum_{\PP\geq\PP_1}\DD(\PP_1,\PP) \sum_{(l_P)_{P\in\PP}} f_{l_{\PP(1)}}\overbar{f_{l_{\PP(2)}}}\cdots f_{l_{\PP(2r-1)}}\overbar{f_{l_{\PP(2r)}}}.
    \end{align}
    We classify all partitions $\PP\geq \PP_1$ on the right hand side into three types:
    \begin{enumerate}
        \item $\PP$ is single if there exists $P\in\PP$ such that $\abs{P}=1$;
        \item $\PP$ is double if $\abs{P}=2$ for any $P\in\PP$.
        \item $\PP$ is joint if it's not single or double.
    \end{enumerate}

    Note that $\PP_1$ is a good partition (Definition \ref{good_part}), so Corollary \ref{useful} gives $\DD(\PP_1,\PP) = (-1)^s$ for any double partition $\PP\geq \PP_1$. Hence we can further write (\ref{apply'}) as
    \begin{align}\label{classify}
        \sum_{(l_P)_{P\in\PP_1}}^* f_{l_{\PP_1(1)}}\overbar{f_{l_{\PP_1(2)}}}\cdots f_{l_{\PP_1(2r-1)}}\overbar{f_{l_{\PP_1(2r)}}}
        & = (-1)^s\sum_{\substack{\PP\geq\PP_1\\\PP \text{ is double}}}\sum_{(l_P)_{P\in\PP}} f_{l_{\PP(1)}}\overbar{f_{l_{\PP(2)}}}\cdots f_{l_{\PP(2r-1)}}\overbar{f_{l_{\PP(2r)}}}\nonumber\\
        + &
        \sum_{\substack{\PP\geq\PP_1\\\PP \text{ is single}}}\DD(\PP_1,\PP)\sum_{(l_P)_{P\in\PP}} f_{l_{\PP(1)}}\overbar{f_{l_{\PP(2)}}}\cdots f_{l_{\PP(2r-1)}}\overbar{f_{l_{\PP(2r)}}}\nonumber\\
        + & \sum_{\substack{\PP\geq\PP_1\\\PP \text{ is joint}}}\DD(\PP_1,\PP) \sum_{(l_P)_{P\in\PP}} f_{l_{\PP(1)}}\overbar{f_{l_{\PP(2)}}}\cdots f_{l_{\PP(2r-1)}}\overbar{f_{l_{\PP(2r)}}}.
    \end{align}
    
    For double partitions, apply Lemma 7 in \cite{2023arXiv231201717Z} (the ``folding-the-circle'' trick) to throw away those non-diagonal terms, we get
    \begin{align}\label{double_part}
        \sum_{\substack{\PP\geq\PP_1\\\PP \text{ is double}}}\sum_{(l_P)_{P\in\PP}} f_{l_{\PP(1)}}\overbar{f_{l_{\PP(2)}}}\cdots f_{l_{\PP(2r-1)}}\overbar{f_{l_{\PP(2r)}}} 
        \geq
        \left( \sum_{l=1}^L \abs{f_l}^2 \right)^r.
    \end{align}

    Now integrate both sides of (\ref{classify}) over $X$. The form of $\PP_1$ coincides with our $s$-Type IV assumption, so the left hand side vanishes. Therefore, by (\ref{double_part}) and the triangle inequality, we obtain
    \begin{align}\label{sorted}
        \int_X \left( \sum_{l=1}^L \abs{f_l}^2 \right)^r d\mu \leq & \sum_{\substack{\PP\geq\PP_1\\\PP \text{ is single}}}\abs{\DD(\PP_1,\PP)}\int_X\abs{\sum_{(l_P)_{P\in\PP}} f_{l_{\PP(1)}}\overbar{f_{l_{\PP(2)}}}\cdots f_{l_{\PP(2r-1)}}\overbar{f_{l_{\PP(2r)}}}} d\mu\nonumber\\
        & + \sum_{\substack{\PP\geq\PP_1\\\PP \text{ is joint}}}\abs{\DD(\PP_1,\PP)}\int_X\abs{\sum_{(l_P)_{P\in\PP}} f_{l_{\PP(1)}}\overbar{f_{l_{\PP(2)}}}\cdots f_{l_{\PP(2r-1)}}\overbar{f_{l_{\PP(2r)}}}} d\mu.
    \end{align}

    The remaining parts of the proof, i.e., the bounds for the single/joint terms, are exactly the same as those in \cite{2023arXiv231201717Z}, so we do not repeat those arguments here.
\end{proof}
\begin{rmk}
    If we take $s=0$ in Theorem \ref{converse_variant}, then we recover Theorem \ref{main thm}.
\end{rmk}

In the proof of Theorem \ref{converse_variant}, it is important that we use Corollary \ref{identity_multilinear}, which is flexible because we do not have to confine ourselves to $\PP_1=\PP_0$. And such benefit ultimately comes from the more general version of the algebraic identity: Theorem \ref{main result}. 

One may ask if we can choose other $\PP_1$ in Corollary \ref{identity_multilinear} to yield more types of ``superorthogonality'' and their related inequalities. This can certainly be done. However, such inequalities will probably not be ``square function estimates'', and so not be of much interest. For example, in the proof of Theorem \ref{converse_variant}, a key point is that $\PP_1$ is a good partition, which ensures that it can be completed to a double partition. If $\PP_1$ itself is joint, then each $\PP\geq \PP_1$ must also be joint, and so there is no way to extract a term $\left( \sum_{l=1}^L \abs{f_l}^2 \right)^r$.

Another issue is whether our additional condition (\ref{necessary}) is ``well-posed'' in our setting of $s$-Type IV superorthogonality in Theorem \ref{converse_variant}. In other words, can we explicitly find $\{f_l\}_{1\leq l\leq L}$ that can be covered by Theorem \ref{converse_variant}, but out of the classical regime of Theorem \ref{main thm}?

The answer is affirmative. In fact, Example \ref{invent_example} suffices. We have shown the $\{f_l\}_{1\leq l\leq L}$ constructed in Example \ref{invent_example} are not superorthogonal of Type IV when $s_0\neq0$. So it remains to show that they all satisfy the additional condition (\ref{necessary}):
\begin{align*}
    \norm{\left(\sum_{l=1}^L\abs{f_l}^{2r}\right)^{\frac{1}{2r}}}_{2r}
    \lesssim_r 
    \norm{\sum_{l=1}^L f_l}_{2r}.
\end{align*}

The $r=1$ case can be easily verified by the Plancherel theorem, since each $f_l$ is of the form $\{e^{2\pi i C_l\cdot x}\}$, where $C_l\in\Z^N$ all distinct. So by interpolation, we only need to verify the $r=\infty$ case:
\begin{align*}
    \sup_{1\leq l\leq L}\norm{f_l}_{\infty}
    \lesssim_r 
    \norm{\sum_{l=1}^L f_l}_{\infty}.
\end{align*}
As $\norm{f_l}_\infty\equiv1$, things are further reduced to $1\lesssim_r\norm{\sum_{l=1}^L f_l}_{\infty}$.
For this purpose, we apply the theory of Sidon sets.
\begin{defn}[Sidon set]
    An infinite subset $\Lambda\subseteq \Z$ is called a ``Sidon set'', provided that for every trigonometric polynomial $p=\sum_{\lambda\in\Lambda}a_\lambda e^{2\pi i\lambda\cdot}$, one has $\norm{\{a_\lambda\}_\lambda}_{\ell^1(\Lambda)}\leq C(\Lambda)\norm{p}_{L^\infty(\T^1)}$, with a constant $C(\Lambda)$ independent of the trigonometric polynomial.
\end{defn}
\begin{defn}[Lacunary sequences]
    Let $\rho>1$ be arbitrary but fixed, and $\lambda\defeq\{n_j\}_{j\in\N^+}$ be an infinite increasing sequences of $\N^+$ with $n_{j+1}>\rho n_j$, $\forall\,j\in\N^+$. Then we call $\Lambda$ ``lacunary''.
\end{defn}
\begin{prop}[Corollary 6.17 in \cite{muscalu2013classical}, Theorem 3.6.6 in \cite{loukas2014classical}]\label{lacunary}
    Any lacunary $\Lambda$ is Sidon, and $C(\Lambda)$ can be taken to only depend on $\rho>1$. Any finite union of lacunary sequences.
\end{prop}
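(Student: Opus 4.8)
This is classical, so I would reconstruct the standard Riesz-product proof. The plan is to proceed in three steps: first treat a single lacunary sequence whose ratio is large ($\rho\ge 3$); then drop the largeness assumption by a splitting argument; and finally pass to finite unions.

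\emph{Step 1: large ratio.} Suppose $\Lambda=\{n_j\}_{j\in\N^+}$ with $n_{j+1}\ge 3n_j$, and let $p=\sum_j a_j e^{2\pi i n_j\cdot}$ be a trigonometric polynomial. I would choose unimodular $\e_j$ with $\e_j a_j=\abs{a_j}$ and form the Riesz product
\[
 R(x)=\prod_j\Bigl(1+\mathrm{Re}\bigl(\overline{\e_j}\,e^{2\pi i n_j x}\bigr)\Bigr)
 =\prod_j\Bigl(1+\tfrac12\overline{\e_j}\,e^{2\pi i n_j x}+\tfrac12\e_j\,e^{-2\pi i n_j x}\Bigr),
\]
which is pointwise nonnegative. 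Expanding the product produces frequencies of the form $\sum_{j\in S}\pm n_j$, and the crucial observation, proved by a short induction on $\max S$ using $n_{j+1}\ge 3n_j$, is that such a sum equals $0$ only for $S=\varnothing$ and equals some $n_{j_0}$ only for $S=\{j_0\}$ with sign $+$. Hence $\widehat R(0)=1$ (so $\norm{R}_{L^1(\T^1)}=\int_{\T^1}R=1$) and $\widehat R(n_j)=\tfrac12\overline{\e_j}$ with no other contributions, and Parseval gives
\[
 \sum_j\abs{a_j}=\sum_j\e_j a_j=2\sum_j a_j\,\overline{\widehat R(n_j)}=2\int_{\T^1}p\,\overline R\le 2\norm{p}_{L^\infty(\T^1)},
\]
so $\Lambda$ is Sidon with $C(\Lambda)\le 2$.

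\emph{Step 2: arbitrary $\rho$.} If only $n_{j+1}>\rho n_j$ is known, I would fix $k=k(\rho)\in\N^+$ with $\rho^k\ge 3$ and split $\Lambda$ into the $k$ subsequences consisting of those $n_j$ with $j$ in a fixed residue class modulo $k$; each such subsequence has consecutive ratio $>\rho^k\ge 3$, hence is Sidon with constant $\le 2$ by Step 1. Thus a general lacunary set is a union of $k(\rho)$ Sidon sets, which reduces the claim to the behaviour of finite unions.

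\emph{Step 3: finite unions.} It remains to show that a union of finitely many Sidon sets is Sidon, with constant controlled by the number of pieces and their individual Sidon constants; combined with Step 2, this yields the assertion for finite unions of lacunary sequences. For unions of lacunary sequences I would again run the Riesz-product argument of Step 1, the point being that among the (finitely many, interleaved) lacunary sequences the signed representations $\sum\pm m_i=\sum\pm m_i'$ have uniformly bounded multiplicity, so the stray contributions to $\widehat R(n_j)$ are under control; alternatively one appeals to Drury's union theorem. I expect this last step to be the genuine obstacle: Step 1 is essentially automatic once uniqueness of signed representations is noted, whereas for finite unions the frequencies no longer have unique representations, and one must either bookkeep the bounded number of collisions in the Riesz product carefully or invoke the considerably deeper fact that the union of two Sidon sets is Sidon.
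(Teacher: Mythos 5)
The paper does not actually prove this proposition --- it is quoted from the cited textbooks (Muscalu--Schlag, Grafakos), so the comparison is with the standard proofs there. Your Steps 1 and 2 are exactly that standard argument and are correct: the Riesz product $R=\prod_j\bigl(1+\mathrm{Re}(\overline{\e_j}e^{2\pi i n_jx})\bigr)$ (a finite product, since $p$ is a trigonometric polynomial), the uniqueness of signed representations when $n_{j+1}\geq 3n_j$, and the splitting of a $\rho$-lacunary sequence into $k(\rho)$ subsequences of ratio at least $3$, giving constant $2$ for each piece. The issue is Step 3, which is where the full statement (including ``$C(\Lambda)$ depends only on $\rho$'' even for a single lacunary sequence with $\rho$ close to $1$, and the finite-union clause) actually lives, and which you do not carry out. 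Your first suggested route --- that for interleaved lacunary sequences ``the signed representations have uniformly bounded multiplicity,'' so the stray Fourier coefficients of the Riesz product are under control --- is not justified and is not true without further work: a union of Hadamard sequences need not be quasi-independent, and bounding the number of representations of a fixed frequency as signed sums across several interleaved geometric scales is precisely the nontrivial bookkeeping that the textbook proofs (via small-amplitude Riesz products or an interpolation-of-signs argument) are designed to handle. Your fallback, Drury's union theorem, does close the gap legitimately: it is quantitative (the Sidon constant of a union is bounded by a universal function of the constants of the pieces), so applying it $k(\rho)-1$ times yields a constant depending only on $\rho$, and finitely many further applications give the finite-union statement; note that the paper itself records exactly this tool as Proposition \ref{union_property} from the same reference. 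So your outline is sound, but as written Step 3 either rests on an unproved (and dubious) multiplicity claim or defers entirely to a theorem substantially deeper than the lacunary case itself; to be a self-contained proof of the proposition you would need to replace that claim by the textbook's careful union argument or explicitly invoke Drury.
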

Moreover, according to the proof, if $\Lambda=\cup_{k=1}^m \Lambda_k$ is a finite disjoint union, where $\Lambda_k$'s are all lacunary with a uniform $\rho>1$, then $C(\Lambda)$ can be taken to only depend on $\rho$ and $m$. 

In general, we actually have:
\begin{prop}[Corollary 6.22 in \cite{muscalu2013classical}]\label{union_property}
    The union of two Sidon sets in $\Z$ is another Sidon set.
\end{prop}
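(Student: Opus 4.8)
The assertion is Drury's union theorem for Sidon sets, and I would prove it by combining the functional-analytic reformulation of Sidonicity with a single invocation of Drury's Riesz-product interpolation lemma. \textbf{Reduction.} A subset of a Sidon set is Sidon with no larger constant, and the best constant in the Sidon inequality for $\Lambda$ is the supremum of the best constants over its finite subsets (every trigonometric polynomial has finite spectrum). So, after replacing the second set by its difference with the first, it suffices to show that if $E_1,E_2\subseteq\Z$ are \emph{disjoint} with Sidon constants $\kappa_1,\kappa_2$, then there is $C=C(\kappa_1,\kappa_2)$ with $\norm{\widehat p}_{\ell^1}\le C\norm p_{L^\infty(\T)}$ for every trigonometric polynomial $p$ whose spectrum lies in a finite subset of $E\defeq E_1\cup E_2$. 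By the duality $C(\T)^*=M(\T)$ it is enough to produce, for each $\phi\colon E\to\{z\in\C:\abs z=1\}$, a measure $\mu\in M(\T)$ with $\norm\mu\le C$ and $\operatorname{Re}\bigl(\phi(\gamma)\,\overline{\widehat\mu(\gamma)}\bigr)\ge\tfrac12$ for all $\gamma\in E$: indeed, taking $\phi$ to be the unimodular phase of $\widehat p$ gives $\tfrac12\norm{\widehat p}_{\ell^1}\le\operatorname{Re}\sum_\gamma\widehat p(\gamma)\overline{\widehat\mu(\gamma)}=\operatorname{Re}\int_\T p\,\overline{d\mu}\le\norm p_{L^\infty}\norm\mu$, whence $\norm{\widehat p}_{\ell^1}\le 2C\norm p_{L^\infty}$.

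\textbf{The measure.} Sidonicity of $E_1$ means precisely (Hahn--Banach) that every $a\in\ell^\infty(E_1)$ equals $\widehat\mu|_{E_1}$ for some $\mu\in M(\T)$ with $\norm\mu\le\kappa_1\norm a_\infty$; choose $\mu_1$ with $\widehat{\mu_1}|_{E_1}=\phi|_{E_1}$ and $\norm{\mu_1}\le\kappa_1$. Put $\psi\defeq\phi|_{E_2}-\widehat{\mu_1}|_{E_2}\in\ell^\infty(E_2)$, so $\norm\psi_\infty\le1+\kappa_1$. Now apply Drury's lemma to the Sidon set $E_2$ with the finite, disjoint ``forbidden set'' $F\defeq E_1$: there is a constant $\beta(\kappa_2)$ and a measure $\mu_2\in M(\T)$ with $\norm{\mu_2}\le\beta(\kappa_2)\norm\psi_\infty$, with $\widehat{\mu_2}|_{E_2}$ matching $\psi$ within $\tfrac14$ in sup norm, and with $\abs{\widehat{\mu_2}(\gamma)}\le\tfrac12$ for every $\gamma\in E_1$. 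Set $\mu\defeq\mu_1+\mu_2$. Then $\norm\mu\le\kappa_1+\beta(\kappa_2)(1+\kappa_1)\eqdef C$; on $E_2$ we have $\widehat\mu=\phi$ up to an error of modulus $\le\tfrac14$, whence $\operatorname{Re}(\phi\,\overline{\widehat\mu})\ge\tfrac34$; and on $E_1$ we have $\widehat\mu=\phi+\widehat{\mu_2}$ with $\abs{\widehat{\mu_2}}\le\tfrac12$, whence $\operatorname{Re}(\phi\,\overline{\widehat\mu})\ge\tfrac12$. This proves the theorem, granted Drury's lemma.

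\textbf{The main obstacle: Drury's lemma.} The one substantial point is the interpolation lemma used above: for a Sidon set $E_2$ with constant $\kappa_2$, a finite $G\subseteq E_2$, a target $\psi\colon G\to\C$ with $\norm\psi_\infty\le1$, a fixed tolerance, and a finite set $F$ disjoint from $E_2$, there is a measure of total variation $\le\beta(\kappa_2)$ whose transform matches $\psi$ on $G$ within the tolerance while having modulus $\le\tfrac12$ on all of $F$. I would prove this in the classical way via \emph{Riesz products}: build the measure as a product (a convolution on the transform side) of finitely many probability-measure factors $1+\operatorname{Re}(c_\gamma e^{2\pi i\gamma x})$ indexed by the points of $G$, so that the product is again a probability measure whose transform is supported on the finite set of $\pm1$-combinations of the chosen frequencies and reproduces the prescribed phases up to a universal factor; Sidonicity of $E_2$ is exactly what keeps the total-variation norm of such finite Riesz products bounded by a function of $\kappa_2$ rather than growing with $\abs G$, and a routine averaging/translation argument then removes the residual resonances that land in $F$. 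Choosing the $c_\gamma$, controlling the stray spectrum, and bookkeeping the norm is where all the real work lies; everything else above is soft. (Alternatively one can quote a Pisier characterization of Sidon sets --- e.g.\ that a set is Sidon iff it is uniformly $\Lambda(p)$ with constant $O(\sqrt p)$, which reduces the union theorem to a one-line triangle inequality plus orthogonality of disjoint spectra --- but such characterizations are themselves deep theorems, so this merely relocates the difficulty.)
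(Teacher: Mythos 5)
There is a genuine gap, and it sits exactly where you flag it. First, for context: the paper does not prove this proposition at all — it is quoted as Corollary 6.22 of the cited textbook (Drury's union theorem), so any self-contained proof has to carry the full weight of Drury's argument. Your reduction is fine: passing to finite spectra, disjointifying the two sets, and dualizing Sidonicity to the statement that every unimodular $\phi$ on $E_1\cup E_2$ is approximately matched by $\widehat{\mu}$ for a measure of controlled norm is the standard soft part. But the entire substance of the theorem is the interpolation lemma you invoke — a measure whose transform matches a prescribed bounded function on $E_2$ up to a fixed tolerance while being small on the disjoint finite set $E_1$ — and your sketch of it does not work as stated. A product of factors $1+\operatorname{Re}\bigl(c_\gamma e^{2\pi i\gamma x}\bigr)$ over the points $\gamma$ of a finite subset $G$ of $E_2$ is a nonnegative (hence probability) measure with spectrum confined to the $0,\pm1$ combinations of the chosen frequencies only when those frequencies are dissociate (or quasi-independent); an arbitrary finite subset of a Sidon set has no such arithmetic independence, and without it the product need not be nonnegative, its total variation is not controlled, and its spectrum is not confined in the way your ``reproduces the prescribed phases up to a universal factor'' step requires. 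Sidonicity by itself does not rescue this: the fact that Sidon sets behave like dissociate sets for this purpose is precisely Drury's theorem, so the argument as written is circular at its core.

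The genuine proof (Drury's) runs the Riesz-product construction not on $\T$ over the frequencies of $G$, but on an auxiliary product group (a torus with one independent coordinate per element of the Sidon set), where dissociateness is automatic, and then transfers back by Drury's convolution/averaging device, using the Sidon constant of $E_2$ to control the norm of the transferred measure and an $\e$-perturbation plus averaging to suppress the transform on the finite forbidden set $E_1$. That transfer step is the theorem; your proposal explicitly defers it (``where all the real work lies''), and the alternative you mention via Pisier's characterization is, as you concede, at least as deep. So the proposal is an accurate outline of the known strategy, with the correct reduction, but it does not constitute a proof of the proposition: the one nontrivial lemma is both unproved and, in the form sketched (Riesz products directly on $\T$ over $G$), incorrect for general Sidon sets.
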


Back to Example \ref{invent_example}, within 
$\B$ or $\B_k$ ($r-s_0<k\leq N$), we can regard it as a one-dimensional problem. When $\B\neq\varnothing$, since $\{2^j\}_{j\in[r-s_0-1]}$ is lacunary with $\rho=2$, it is Sidon by Proposition \ref{lacunary}, and so $1 \leq 2(r-s_0-1) \leq C \norm{2\sum_{f_l\in\B}f_l}_{\infty}$ with $C$ independent of $r,s_0$. For $\B_k$, since $\{2^j2^k\}_{j\in[k-1]}\cup\{(2^k+1-2^{-k})2^k\}$ is lacunary with $\rho=2$, and $\{(2^j+2^{-j})2^k\}_{j\in[k]}$ is lacunary with $\rho=\frac{4}{3}$, there union $\B_k$ is Sidon by Proposition \ref{lacunary}, and $1 \leq 2k \leq C' \norm{\sum_{f_l\in{\B_k}}f_l}_{\infty}$ with $C'$ independent of $r,s_0,k$. Note that $\B$ and $\B_k$, $r-s_0<k\leq N$, are independent with each other, as each of them only involves one coordinate. Therefore, by suitably arranging the signs of $\{f_l\}_l$ (which doesn't affect assumptions of superorthogonality), we can always make $\{f_l\}_l$ satisfy $\norm{\sum_{l=1}^L f_l}_{\infty} = \norm{2\sum_{f_l\in\B}f_l}_{\infty}+ \sum_{r-s_0<k\leq N} \norm{\sum_{f_l\in{\B_k}}f_l}_{\infty} \geq \min(\frac{1}{C},\frac{1}{C'})\gtrsim 1$. So the verification of (\ref{necessary}) is completed.

A final remark is that the converse inequality is a bit sensitive, as pointed out in \cite{2023arXiv231201717Z}, since even a slightly weaker version of Type IV superorthogonality fails. But our $s$-Type IV superorthogonality are neither weaker nor stronger than the original Type IV superorthogonality in general, so it makes sense that we can still obtain the converse inequality.

\section{Further directions}\label{sec:directions}
Although we have successfully obtained the optimal order of the formal constant in the direct inequality under Type III / Type IV superorthogonality, there are still many things we don't know:
\begin{itemize}
    \item For the formal constant in the direct inequality under Type II superorthogonality, the proof in \cite[Section 3.1]{pichorides1990note} only gives $C_r\leq r$, and the optimal order is still unknown.
    \item Our understanding of the formal constant in the converse inequality is still very limited, as briefly discussed at the end of Subsection \ref{subsec:constant}. This is a problem because the classical inequalities in martingale theory and Littlewood-Paley theory in Subsection \ref{subsec:applications} all only involve constants of very low order. It's interesting to at least get a polynomial bound in $r$, and compare it with the constants in those classical inequalities. The framework of the proof of the converse inequality in \cite{2023arXiv231201717Z} should be modified in some way.
    \item Actually, our algebraic identity (\ref{algebraic}) only uses the information of the symmetric sum of distinct terms, which is logically weaker than Type IV superorthogonality. Are there other ways to exploit Type IV superorthogonality?
    \item Theorem \ref{effective direct} is sharp in the sense that the order $O(r)$ of $C_r$ is sharp, but we don't know what the exact best $C_r$ is. Recall that Theorem \ref{Burkholder} tells us $C_r\geq 2r-1$, and Theorem \ref{effective direct} tells us $C_r \leq 4r$. 
    There are still obvious losses in our argument, but for now it seems hopeless to significantly narrow down the gap between $2$ and $4$. Combinatorial estimates like (\ref{program_result}) seems to be of interest itself, and we conjecture that it actually holds true for any $n$ and $\alpha$, but we do not know how to prove it. If this is the case, then we will reach the bound $(1+\sqrt{5})r \approx 3.236 r$, which is still far from $2r-1$. Therefore, it seems that $(1+\sqrt{5})r$ is an essential barrier in our framework.
\end{itemize}

\bibliographystyle{alpha}
\bibliography{sources}

\end{document}